\documentclass[12pt,twoside]{amsart} 

\usepackage{amssymb,latexsym,bbm,graphicx,epsfig,epic,eepic,oldgerm,psfrag,comment}
\usepackage{amsmath,amsfonts,amscd,mathrsfs,amsthm}
\usepackage{mathtools}
\usepackage{mathtools}
\usepackage{forest}

\usepackage{tikz,wasysym}  \usetikzlibrary{trees}
\usepackage{marvosym}                  
\usepackage{hyperref} 

\usepackage{xypic} 
\input xy
\xyoption{all}

\theoremstyle{plain}
\newtheorem{theorem}{Theorem}[section]
\newtheorem{lemma}[theorem]{Lemma}                           
\newtheorem{proposition}[theorem]{Proposition}

\newtheorem*{remark*}{Remark}
\newtheorem*{remarks*}{Remarks}

\newtheorem*{example*}{Example}
\newtheorem*{examples*}{Examples}

\newtheorem*{definition*}{Definition}

\newtheorem*{claim*}{Claim}

\numberwithin{figure}{section}
\numberwithin{equation}{section}

\newcommand{\proofend}{\hspace*{\fill} $\square$\\}




\def\1{\:\!}
\def\2{\;\!}

\def\Diffc0{\operatorname{Diff^c_0}}

\def\Sympc0{\operatorname{Symp^c_0}}

\def\cl{{\mathcal L}}

\def\CC{\mathbb{C}}

\def\LL{\mathbb{L}}
\def\NN{\mathbb{N}}

\def\RR{\mathbb{R}}

\def\ZZ{\mathbb{Z}}


\begin{document}

\title{\vspace*{0cm}On the Gromov width of complements of Lagrangian tori}

\date{\today}

\author{Richard Hind}
\address{Department of Mathematics\\
University of Notre Dame\\
255 Hurley\\
Notre Dame, IN 46556, USA.}

\thanks{The author is partially supported by Simons foundation grant number 633715.}

\begin{abstract}
An integral product Lagrangian torus in the standard symplectic $\CC^2$ is defined to be a subset $\{ \pi|z_1|^2 = k, \, \pi|z_2|^2 =l \}$ with $k,l \in \mathbb{N}$.
Let $\cl$ be the union of all integral product Lagrangian tori. We compute the Gromov width of complements $B(R) \setminus \cl$ for some small $R$, where $B(R)$ denotes the round ball of capacity $R$.
\end{abstract}

\maketitle

\section{Introduction}

The Gromov width is a fundamental quantitative invariant of symplectic manifolds, and a basic problem is to detect changes in the width when we restrict to the complement of Lagrangian submanifolds. In this form the question goes back to Biran \cite{biran}, and if the Gromov width of the complement is smaller than that of the whole symplectic manifold we say the Lagrangian is a barrier.

In this paper we study the complement of disjoint unions of Lagrangian tori.  This is of interest to dynamical systems. Wandering domains are open sets which remain disjoint from themselves under iterations, say of a time 1 flow. The article \cite{lms} of Lazzarini, Marco and Sauzin studied wandering domains in Hamiltonian perturbations of integrable systems. Integrable systems themselves have no wandering domains, since the Hamiltonian flow acts linearly on the invariant tori. More generally, after a perturbation, any wandering domain must be disjoint from all KAM tori. Nevertheless Arnold diffusion shows wandering domains may exist after perturbation, and  
quantitative estimates on the size of wandering domains were obtained in \cite{lms}. In the same paper the authors raise the question of determining the Gromov width of the complement of the invariant tori; upper bounds on the Gromov width give a priori estimates on the symplectic size of wandering sets. See also the article of Schlenk, \cite{scha} section 4.4, for a discussion of this.

Here we study the simplest example, when the ambient symplectic manifold is $\RR^4$. We identify $\RR^4$ and $\CC^2$ and use coordinates $(z,w)$ so the standard symplectic form is $\omega = \frac{i}{2}(dz \wedge d\overline{z} + dw \wedge d\overline{w})$. Then we can define a ball of capacity $a$ by $$B(a) = \{ \pi( |z|^2 + |w|^2) < a\}.$$ For $U \subset \CC^2$ the Gromov width $$c_G(U) = \sup \{a \, | \, B(a) \hookrightarrow U \}$$ where throughout $V \hookrightarrow U$ denotes the existence of a symplectic embedding of $V$ into $U$.

We will study the complement of Lagrangian tori $$L(k,l) = \{\pi |z|^2 = k, \, \pi |w|^2 =l\}$$ for $k,l \in \NN$. We call these the integral Lagrangian tori. Let $\cl = \cup L(k,l)$. By inclusion we have embeddings $B(a) \hookrightarrow \CC^2 \setminus \cl$ for all $a<2$, however before the current work it seems no embeddings of larger balls were known, and so one could ask whether $c_G(\CC^2 \setminus \cl) =2$.

Our approach is to restrict the ambient space to a ball $B(R)$ and begin an analysis of how $c_G(B(R) \setminus \cl)$ varies with $R$. We note that this function is clearly nondecreasing, but is not obviously continuous.

\begin{theorem}\label{main}
\begin{equation} \label{mainform}
c_G(B(R) \setminus \cl) =
\left\{
\begin{array}{lcl}
R    & R \le 2;    \\ [0.2em]
2    & 2< R \le 3;    \\ [0.2em]
R-1    & 3< R \le 4;    \\ [0.2em]
3    & 4< R \le 5;    \\ [0.2em]
\ge R-2    & 5< R \le 6;    \\ [0.2em]
4  & R=6 .
\end{array}
\right.
\end{equation}
\end{theorem}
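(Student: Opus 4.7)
My plan is to establish the formula by proving upper and lower bounds separately on each sub-interval of $R$. The toric moment picture identifies $B(R)$ with the open simplex $\Delta_R=\{x,y>0,\, x+y<R\}$ and the torus $L(k,l)$ with the lattice point $(k,l)\in \Delta_R$; this picture organizes both the embedding constructions that give the lower bounds and the pseudo-holomorphic curve obstructions that give the upper bounds.

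For the lower bounds the first two ranges are immediate. For $R\le 2$ no integral torus meets $B(R)$, since $k+l\ge 2$ when $k,l\in \NN$, so the identity gives $c_G\ge R$. For $2<R\le 3$ only $L(1,1)$ is present, and $L(1,1)\subset \pp B(2)$, so $B(2)\hookrightarrow B(R)\setminus \cl$ by inclusion. In the remaining ranges the open ball of the required capacity contains some integral tori in its interior, so inclusion alone cannot work, and I would construct explicit symplectic embeddings $B(a)\hookrightarrow B(R)\setminus \cl$ by routing $B(a)$ through a concave toric subdomain of $\Delta_R$ whose boundary is a lattice staircase skirting the interior tori, in the spirit of the standard symplectic folding/staircase constructions. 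The endpoint $R=6$ with $c_G\ge 4$, which requires an embedding avoiding all ten tori with $k+l\le 5$ simultaneously, is the most delicate lower bound.

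For the upper bounds I would use pseudo-holomorphic curves, in the spirit of Gromov, McDuff, Cieliebak--Mohnke, and the author's earlier work. Given $B(a)\hookrightarrow B(R)\setminus \cl$, I would compactify to $\CP^2$ of line area $R$, blow up the embedded ball to produce an exceptional divisor $E$, and choose an $\omega$-compatible almost-complex structure that is neck-stretched along a carefully chosen subset of the tori $L(k,l)\subset \cl$. Moduli spaces of curves in classes such as $L-E$ or $dL-mE$ through a generic point degenerate in the SFT limit to buildings of punctured spheres asymptotic to Reeb orbits on the stretched tori; the usual area identities and Maslov-index bounds then translate into the inequalities $a\le 2$ on $2<R\le 3$, $a\le R-1$ on $3<R\le 4$, $a\le 3$ on $4<R\le 5$, and $a\le 4$ at $R=6$.

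The main obstacle is producing the sharp upper bounds in the ranges $3<R\le 4$, $4<R\le 5$, and at $R=6$, where several integral tori lie in $B(R)$ and must be handled simultaneously. There the choice of which tori to stretch along and which curve class to use is subtle; one must verify, via a cobordism with an integrable model, that the relevant moduli space is nonempty at the SFT limit, while a careful index computation for the asymptotic Reeb orbits together with positivity of intersection must rule out extraneous components absorbing the area constraint. I expect this to be the technical heart of the paper, with the $R=6$ upper bound the hardest case.
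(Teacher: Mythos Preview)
Your outline has the right architecture, but you have misidentified where the difficulty lies, and the method you sketch would not handle the genuinely hard case. For the upper bounds, $R=6$ is \emph{not} the hardest: along with lines 2 and 3, it follows from stretching the $(L-E)$-foliation along a \emph{single} monotone torus $L(n,n)$, giving $a\le\max(2n,R-n)$; with $n=2$ at $R=6$ this already yields $a\le 4$. The delicate upper bound is line 4, $4<R\le 5$, where the single-torus bound only gives $R-1$. To obtain $a\le 3$ the paper stretches along the \emph{pair} $L(1,2)\cup L(2,2)$, and the argument goes well beyond area/Maslov bookkeeping on $dL-mE$ curves: one takes SFT limits of high-degree sections in classes $[\Sigma+dF]$, deforms the limiting buildings to smooth spheres $F,G$ with controlled intersections, blows up and down to a new $\CP^2\#\overline{\CP^2}$ in which both tori become homologically nontrivial and relatively exact in a $T^*T^2$-complement, and then concludes via Gromov's theorem that exact Lagrangians must intersect the zero section. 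The ``area identities and Maslov-index bounds'' you propose do not produce this obstruction.

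On the constructive side, the paper does not route $B(a)$ through a concave toric staircase. Instead $B(a)$ is simply included, and one builds explicit Hamiltonian diffeomorphisms of $B(R)$ (using symplectic reduction on the spheres $\partial B(k)$ and auxiliary Hamiltonians of the form $G(\theta_i)$) that displace the interior tori from $B(a)$ while leaving the remaining integral tori fixed. The $R=6$ lower bound is not a separate case: it is the endpoint of the line-5 construction for $5<R\le 6$.
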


Given this, one naturally asks whether it is possible to embed balls of capacity greater than $4$, or conversely whether $c_G(B(R) \setminus \cl)$ is constant when $R>6$ and $c_G(\CC^2 \setminus \cl) =4$. We cannot answer this, and it also remains open as to whether $c_G(\CC^2 \setminus \cl) = \infty$.

The proof of Theorem \ref{main} consists of establishing upper bounds (using holomorphic curve methods) and lower bounds (from explicit constructions) for each of the statements. Only the first line is clear, together with the lower bound for line 2. For lower bounds then, given $c_G(B(R) \setminus \cl)$ is nondecreasing, it suffices to deal with lines 3 and 5.

The most delicate upper bound is for line 4. In the cases of lines 2, 3 and 6 we actually obtain the same upper bound for balls avoiding a single monotone torus. 
Precisely, when $2<R \le 4$ we have
$$c_G(B(R) \setminus \cl) = c_G(B(R) \setminus L(1,1))$$
and setting $R=6$ we have
$$c_G(B(6) \setminus \cl) = c_G(B(R) \setminus L(2,2)).$$

We note that for $4 < R \le 5$ the ball $B(R)$ contains $6$ integral Lagrangian tori. It is easy to see that we can embed a closed ball of capacity $3$ avoiding any single one of these tori, but it turns out to be impossible to find an embedding of the closed ball avoiding for example $L(1,2) \cup L(2,2)$. By a result of McDuff \cite{Mc91} all ball embeddings $B(a) \hookrightarrow B(R)$ are Hamiltonian isotopic. Therefore the upper bound can be cast as a Lagrangian packing obstruction, or a necessary intersection in $B(5) \setminus B(3)$ between Lagrangians with different area classes. 

\medskip

{\bf Acknowledgements.} The author thanks Karim Boustany for many enjoyable conversations on these topics.

\section{Constructions}

Here we establish the lower bounds in Theorem \ref{main}. The nontrivial lower bounds follow from those in the third and fifth lines, and specifically we need to construct symplectic embeddings
$$B(a) \hookrightarrow B(R) \setminus \cl$$
when $3<R\le 4$ and $a< R-1$, and when $5<R \le 6$ and $a<R-2$.

\begin{proposition}\label{con1} Let $3<R \le 4$ and $a<R-1$. Then there exists a symplectic embedding
 $$B(a) \hookrightarrow B(R) \setminus (L(1,1) \cup L(1,2) \cup L(2,1)).$$
\end{proposition}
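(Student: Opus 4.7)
In action-angle coordinates $(x_1, x_2) = (\pi|z|^2, \pi|w|^2)$ the three obstacle Lagrangians project to the lattice points $(1,1)$, $(1,2)$, $(2,1)$ in the moment polytope $\Delta_R = \{x_1, x_2 > 0,\, x_1 + x_2 < R\}$ of $B(R)$, all of which have both coordinates $\ge 1$. A direct count shows that the largest affine simplex inside $\Delta_R$ missing these three lattice points has side $2$ (achieved at the origin corner of $\Delta_R$). In particular the toric sub-domain
\[
X_L \;=\; B(R) \cap \{(z,w) : \pi|z|^2 < 1 \text{ or } \pi|w|^2 < 1\}
\]
lies in the complement of $\cl$ but has Gromov width $2$, strictly less than $R-1 \in (2, 3]$. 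So the embedding I seek cannot lie in any toric sub-domain of $B(R)\setminus\cl$; its image must enter the region $\{x_1 \ge 1,\, x_2 \ge 1\}$ of $\Delta_R$ while threading between the three codimension-two torus fibers.

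I plan to construct the embedding in two steps: first build a ball of capacity close to $R-1$ in $B(R) \setminus L(1,1)$, then adjust to additionally avoid $L(1,2)$ and $L(2,1)$. For the first step, observe that $L(1,1) = \{|z|^2 = |w|^2 = 1/\pi\}$ sits on $\partial B(2) \cong S^3$ as a proper Clifford $T^2$, so $\partial B(2) \setminus L(1,1)$ is a pair of open solid tori. The classical two-ball packing $B(R-2) \sqcup B(2) \hookrightarrow B(R)$, arranged with $B(2)$ centred, produces a ball of capacity $R-2$ in $B(R) \setminus \overline{B(2)}$ and hence disjoint from $L(1,1)$. The extra unit of capacity up to $R-1$ I would obtain by a symplectic folding in the style of Traynor / Lalonde-McDuff: use one of the two solid-torus ``gaps'' in $\partial B(2) \setminus L(1,1)$ to fold part of the outer ball inward across $\partial B(2)$, picking up additional Gromov width without crossing $L(1,1)$. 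For the second step I would arrange this folding away from the sphere $\partial B(3)$, where $L(1,2)$ and $L(2,1)$ live; additional flexibility is available because $L(1,2)$ and $L(2,1)$ are non-monotone in $B(R)$ for $R < 5$, so a compactly-supported Hamiltonian perturbation of the folded ball can be used to complete the avoidance.

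The main obstacle is the folding step: the gain of $1$ unit of capacity beyond the straightforward two-ball packing bound is tightly linked to the codimension-one position of $L(1,1)$ inside $\partial B(2)$, and realising it requires an explicit symplectomorphism in action-angle coordinates that interpolates across the solid-torus gap. Once the single-torus version is in place, the simultaneous avoidance of $L(1,2)$ and $L(2,1)$ should follow by elementary perturbation, with the tightness of the bound $a < R - 1$ corresponding precisely to the holomorphic-curve upper bound from line $3$ of Theorem \ref{main}.
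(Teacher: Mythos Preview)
Your plan is not wrong in spirit, but it misses the much simpler mechanism the paper uses, and the parts you do sketch are not actually carried out.

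The paper's approach is the reverse of yours: rather than building a non-standard ball embedding by folding, one takes the \emph{inclusion} $B(a)\subset B(R)$ and observes that, since $a<R-1\le 3$, the tori $L(1,2)$ and $L(2,1)$ (which sit on $\partial B(3)$) are already outside $B(a)$. Only $L(1,1)$ is in the way. So the task reduces to producing a Hamiltonian isotopy of $B(R)$, supported away from $L(1,2)\cup L(2,1)$, that pushes $L(1,1)$ out of $B(a)$. The displacement of $L(1,1)$ itself is elementary: use a Hamiltonian $H(z,w)=H(z)$ depending only on the first factor, chosen so that in the $z$-disk $D(R-1)$ it displaces $D(1)$ from $D(a-1)$; this is possible precisely because $1+(a-1)=a<R-1$. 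The genuine work is then to ensure that the trace of this isotopy misses $L(1,2)\cup L(2,1)$. The paper handles this by a second Hamiltonian $\psi$, supported near $\partial B(3)$ and constructed via the symplectic reduction $\partial B(3)\to S^2$, which slides $L(1,2)$ and $L(2,1)$ off the short cylinder where the trace of $\phi^t(L(1,1))$ crosses $\partial B(3)$; conjugating $\phi^t$ by $\psi$ gives the desired isotopy.

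By contrast, your two-ball packing plus folding route has a real gap. The two-ball packing only gives capacity $R-2$, and you acknowledge that the missing unit must come from an explicit folding across one of the solid tori in $\partial B(2)\setminus L(1,1)$; but you do not construct this map, and the geometry here is delicate enough that ``Traynor/Lalonde--McDuff style'' is not a substitute for an actual construction. More seriously, your second step (``elementary perturbation'' to avoid $L(1,2)\cup L(2,1)$, justified by non-monotonicity) is not an argument: once the folded ball has been produced, its image may well meet $\partial B(3)$ in a complicated way, and there is no a priori reason a small perturbation suffices. The paper's approach avoids both difficulties by never moving the ball at all.
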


\begin{proof} We start by considering the inclusion $B(a) \subset B(R)$. Since $a< R-1 \le 3$ we see that $L(1,2) \cup L(2,1)$ is disjoint from $B(a)$. Our goal is to find a Hamiltonian diffeomorphism of $B(R)$, with support away from $L(1,2) \cup L(2,1)$, displacing $L(1,1)$ from $B(a)$.

To do this we will construct two Hamiltonian diffeomorphisms. The first, $\phi$, satisfies $\phi(L(1,1)) \cap B(a) = \emptyset$ and the second $\psi$ has support in the complement of $B(a)$ and satisfies $\psi(L(1,2) \cup L(2,1)) \cap \phi^t(L(1,1)) = \emptyset$ for all $t$. Here $\{ \phi^t\}$ is the Hamiltonian flow with $\phi^0 = \mathrm{Id}$ and $\phi^1 = \phi$. Then the Hamiltonian flow $\zeta^t = \psi^{-1} \circ \phi^t \circ \psi$ is as required.
In other words, $\zeta^1(L(1,1)) \subset B(R) \setminus B(a)$ and $\zeta^t(L(1,1))$ is disjoint from $L(1,2) \cup L(2,1)$ for all $t$. Therefore the isotopy $\zeta^t(L(1,1))$ can be generated by a Hamiltonian diffeomorphism with support away from $L(1,2) \cup L(2,1)$.

\vspace{0.1in}

{\bf 1. Construction of $\phi$.}

\vspace{0.1in}

This is generated by a time independent Hamiltonian of the form $H(z,w) = H(z)$. The function $H(z)$ has support in the round disk $D(R-1)$ of area $R-1$, and is such that the corresponding Hamiltonian diffeomorphism $\phi_H$ displaces $D(1)$ from $D(a-1)$. Our bounds imply that $(a-1) + 1 = a < R-1$ and so such a Hamiltonian exists. We also assume that the trace of $\phi_H^t(D(1))$ for $0 \le t \le 1$ intersects $\partial D(2)$ only near $z=\sqrt{ 2 / \pi}$ on the real axis, see Figure \ref{f1}.

Moving to $\CC^2$, the flow $\phi^t$ of $H(z,w)$ is such that $\phi^t(L(1,1))$ intersects a neighborhood of $\partial B(3)$ only near the cylinder $T =\{ \mathrm{Im}(z)=0  \} \times \partial D(1)$ and $\phi^1$ displaces $L(1,1)$ from $B(a)$.

\begin{figure}[h!]
\vspace{.2cm}
\centering
\includegraphics[width=120mm,trim={0cm 0cm 0cm 0cm},clip]{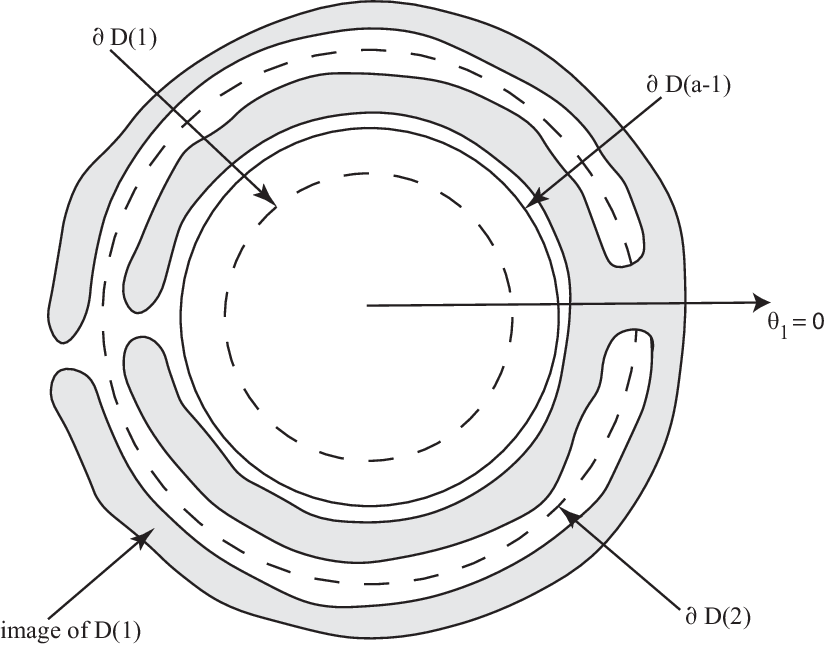}
\caption{The image of $D(1)$ under $\phi_H$.}
\label{f1}
\end{figure}

\vspace{0.1in}

{\bf 2. Construction of $\psi$.} 

\vspace{0.1in}

We construct a Hamiltonian diffeomorphism with support near the sphere $\partial B(3)$ displacing $L(1,2) \cup L(2,1)$ from a neighborhood of the cylinder $T$. 
This is equivalent to constructing a Hamiltonian isotopy of $L(1,2) \cup L(2,1)$, where the Lagrangian tori remain close to $\partial B(3)$.
It is convenient to use symplectic polar coordinates $(R_1, \theta_1, R_2, \theta_2)$ where $R_1 = \pi|z|^2$ and $R_2 = \pi|w|^2$. Also $\theta_1, \theta_2 \in \RR / \ZZ$. Then $T= \{\theta_1 =0, R_2=1\}$.

Consider the symplectic reduction $p : \partial B(3) \to S$, that is, the quotient by the characteristic circles $\Gamma_{t, c} = \{R_1 - R_2 = t, \theta_1 - \theta_2 = c\}$. Then $S$ is a $2$-sphere of area $3$. We can use polar coordinates $(t,c) \in [-3,3] \times S^1$ on $S$, so the reduced symplectic form is $\frac 12 dt \wedge dc$. The product Lagrangians $L(a,b)$ project to circles $\{t=a-b\}$ and Hamiltonian isotopies of these circles in $S$ lift to Hamiltonian isotopies of the tori $L(a,b)$ in $\CC^2$.

With this notation, we have $$p(L(1,2)) = \{t=-1\}, \, \,  p(L(2,1)) = \{t=1\}, \, \, p(T \cap \partial B(3)) = \{t=1\}.$$
The projections of $L(1,2)$ and $L(2,1)$ are shown in Figure \ref{f2} together with their images $\Gamma_1$ and $\Gamma_2$ under a Hamiltonian flow. 
(Rather, the figure shows the parts of these circles in one hemisphere.) 
Hence $\Gamma_1$ and $\Gamma_2$ still bound disks of area $1$, the circle $\Gamma_1 \subset \{t<1\}$ while $\Gamma_2$ intersects $\{t \ge 1\}$ only in a small strip, say near $\{c=0\}$. Lifting the isotopy, the image $\LL_1 = p^{-1}(\Gamma_1)$ of $L(1,2)$ is disjoint from $T$ and the image $\LL_2 = p^{-1}(\Gamma_2)$ of $L(2,1)$ intersects $\{R_1 \ge 2\}$ only near the characteristic circles $\Gamma_{t,0}$ with $t \ge 1$.

\begin{figure}[h!]
\vspace{.2cm}
\centering
\includegraphics[width=120mm,trim={0cm 0cm 0cm 0cm},clip]{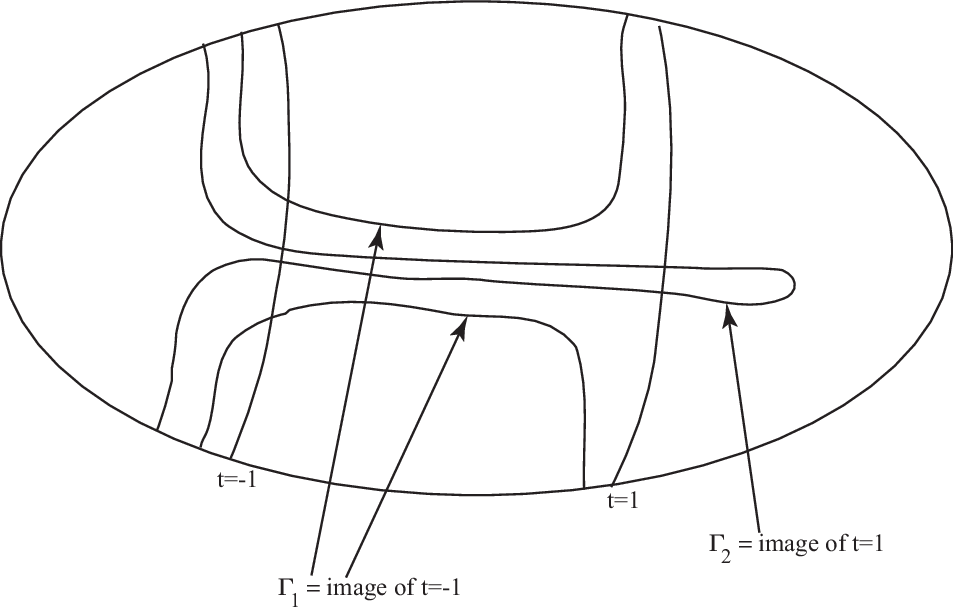}
\caption{Hamiltonian isotopy in the symplectic reduction of $\partial B(3)$.}
\label{f2}
\end{figure}

Finally we apply to $\LL_2$ the diffeomorphism generated by a Hamiltonian $G(\theta_2)$, where $G' = - \epsilon$ near $\theta_2=0$ and $|G'| < \delta$ elsewhere, with $\delta$ small. The corresponding Hamiltonian vector field is $-G'(\theta_2)\partial_{R_2}$, so points of $\LL_2$ which are moved by the flow are pushed away from $\partial B(3)$, and in particular remain disjoint from $\LL_1$. It remains to check that $\LL_2$ is displaced from the trace $\{ \phi^t(L(1,1)) \}$. For $\epsilon$, $\delta$ small any intersections occur near $\partial B(3)$, so we need to check that $\LL_2$ is displaced from $T$.

First, as $\theta_1$ is invariant under the flow, it suffices to consider points of $\LL_2$ with $\theta_1$ close to $0$. Next, as $G' < \delta$, the only points mapping to $T$ must initially have $R_2 < 1 + \delta$, or equivalently $R_1 > 2 - \delta$. Hence we only need to consider points of $\LL_2$ in the preimage of the thin strip near $t=1$. But here the coordinate $c = \theta_1 - \theta_2$ is close to $0$, so we are considering points with both $\theta_1$ and $\theta_2$ small. This means $G' = -\epsilon$ and such points move a distance $\epsilon$ in the positive $R_2$ direction. Initially we may assume $R_1<2+\epsilon$, or equivalently $R_2>1-\epsilon$,
so the image of such points lies in $\{R_2>1\}$, in particular disjoint from $T$ as required.
\proofend
\end{proof}

\begin{proposition}\label{con2} Let $5<R \le 6$ and $a<R-2$. Then there exists a symplectic embedding
 $$B(a) \hookrightarrow B(R) \setminus \cl.$$
\end{proposition}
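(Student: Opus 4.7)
The plan is to follow and extend the two-step strategy of Proposition \ref{con1}. For $a < R - 2 \le 4$, the integral tori meeting $B(a)$ are exactly the three ``mobile'' tori $L(1,1)$, $L(1,2)$, $L(2,1)$, while seven further ``stationary'' tori $L(k,l)$ with $k + l \in \{4, 5\}$ lie in $B(R) \setminus B(a)$. I would construct a main Hamiltonian flow $\phi^t$ whose time-one map displaces the three mobile tori from $B(a)$, together with an auxiliary Hamiltonian diffeomorphism $\psi$ supported in $B(R) \setminus B(a)$ which reconfigures the stationary tori so that $\zeta^t = \psi^{-1} \phi^t \psi$ remains disjoint from them throughout the isotopy. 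The Hamiltonian extension argument used at the end of Proposition \ref{con1} then yields the desired embedding.

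For the main flow, take $H(z,w) = H(z)$ with support in $D(R-1) \subset \CC$, designed so that its time-one map $\phi_H$ carries $D(2)$ into the annulus $D(R-1) \setminus D(a-1)$ while the subdisk $\phi_H(D(1))$ is carried into the narrower annulus $D(R-2) \setminus D(a-1)$. The hypothesis $R - a > 2$ provides exactly the area needed: $D(R-1) \setminus D(a-1)$ has area $R - a > 2$ and accommodates the area-$2$ image, while $D(R-2) \setminus D(a-1)$ has area $R - a - 1 > 1$ and accommodates the area-$1$ subdisk. The constraint $\phi_H^t(D(1)) \subset D(R-2)$ for all $t$ is precisely what keeps $\phi^t(L(1,2))$ inside $B(R)$. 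A direct calculation then verifies $\phi^1(L(j,m)) \cap B(a) = \emptyset$ for each $(j,m) \in \{(1,1), (1,2), (2,1)\}$, since on $\phi_H(\partial D(j))$ we have $|z|^2 > (a-1)/\pi$ and hence $|z|^2 + |w|^2 > (a-1)/\pi + m/\pi \ge a/\pi$. As in Proposition \ref{con1} one arranges that the trace of $\phi_H^t(\partial D(j))$ crosses each integer circle $\partial D(k) \subset \CC$ only in a short arc near the positive real axis.

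The auxiliary flow $\psi$ is built using the symplectic reductions of $\partial B(4)$ and $\partial B(5)$, the $2$-spheres of areas $4$ and $5$ carrying the stationary tori as latitude circles. The trace of $\phi^t$ meets the reduction sphere $S^2_4$ of $\partial B(4)$ along the latitudes $\{t = 0\}$ (from $\phi^t(L(1,2))$) and $\{t = 2\}$ (from $\phi^t(L(1,1) \cup L(2,1))$), and meets the reduction $S^2_5$ of $\partial B(5)$ along $\{t = 1\}$ (from $\phi^t(L(1,2))$) and $\{t = 3\}$ (from $\phi^t(L(2,1))$). Mimicking Figure \ref{f2}, we Hamiltonian-isotope each stationary torus sitting on a bad latitude to a curve of the same area class in the complement: this is directly feasible for $L(3,1)$ on $S^2_4$ (area class $(1,3)$, realizable in $\{t < 0\}$) and for $L(4,1), L(3,2)$ on $S^2_5$ (area classes $(1,4), (2,3)$, realizable in $\{t < 1\}$). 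The remaining tori $L(1,3), L(1,4), L(2,3)$ are already off the bad latitudes and need not be moved.

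The chief obstruction is the torus $L(2,2)$ on $S^2_4$: its area class $(2,2)$ coincides with that of the bisecting latitude $\{t=0\}$ of the area-$4$ sphere, so no simple closed curve on $S^2_4$ disjoint from $\{t = 0\}$ realizes this area class. As in the final step of Proposition \ref{con1}'s $\psi$ construction via $G(\theta_2)$, one allows the lifted isotopy of $L(2,2)$ to leave $\partial B(4)$ a short distance on the thin angular strip $\theta_1 \approx 0$ where the trace of $\phi^t(L(1,2))$ is localized, using a Hamiltonian depending only on $\theta_1$ (times a cutoff in $(R_1, R_2)$) to shift $R_1$ off the value $2$ on this strip; this is the technical heart of the proof. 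Once $\psi$ is in place, the composition $\zeta^t = \psi^{-1} \phi^t \psi$ realizes a Hamiltonian isotopy of $L(1,1) \cup L(1,2) \cup L(2,1)$ inside $B(R) \setminus \bigcup_{k+l \ge 4} L(k,l)$ ending in the complement of $B(a)$, and the proof concludes exactly as in Proposition \ref{con1}.
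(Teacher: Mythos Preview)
Your overall strategy matches the paper's: build a main displacing flow $\phi^t$ and an auxiliary $\psi$ supported in $B(R)\setminus B(a)$, then conjugate. The difficulty is the handling of $L(2,2)$, and the fix you propose does not work as written.

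With your choice of a single Hamiltonian $H(z)$, the entire trace $\bigcup_t\phi^t(L(1,2))$ lies in the hypersurface $\{R_2=2\}$; as a subset of $\{R_2=2\}\cong\CC\times S^1$ it equals $\bigl(\bigcup_t\phi_H^t(\partial D(1))\bigr)\times S^1$, an open $3$-dimensional region. Your $G(\theta_1)$-flow also preserves $\{R_2=2\}$ and merely shifts $R_1$ by $-G'(\theta_1)$, so $\psi(L(2,2))$ near $\theta_1=0$ lands at $(R_1,R_2)=(2+\epsilon,2)$. But the trace of $\phi_H^t(\partial D(1))$ sweeps through all values of $R_1$ in an interval containing $[1,a-1]$ near $\theta_1=0$ (it must, to carry $\partial D(1)$ past $\partial D(a-1)$), so $(2+\epsilon,\theta_1\approx0)$ is still in the swept region and $\psi(L(2,2))$ still meets the trace. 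Shifting $R_1$ cannot help, because the trace is thick in $R_1$; and you cannot uniformly shift $R_2$ off $2$ with a Hamiltonian depending on $\theta_2$, since $\theta_2$ runs over the full circle on both $L(2,2)$ and the trace. The analogy with the end of Proposition~\ref{con1} fails: there the torus $\LL_2$ had first been moved in the reduction so that only a thin strip remained near the bad latitude, pinning down a single value of $c=\theta_1-\theta_2$ and hence of $\theta_2$; here $L(2,2)$ sits exactly on the equator of the area-$4$ sphere, and no reduction isotopy can move it off $\{t=0\}$ while keeping area class $(2,2)$.

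The paper sidesteps this obstruction by \emph{not} moving $L(1,2)$ with $H(z)$. Instead it uses $H(z)$ to displace $L(1,1)\cup L(2,1)$ (both with $R_2=1$) and a separate Hamiltonian $H(w)$ to displace $L(1,2)$ (keeping $R_1=1$). The trace then meets $\partial B(4)$ only near two cylinders $T_2=\{R_2=1,\theta_1=0\}$ and $T_1=\{R_1=1,\theta_2=0\}$, which project to the latitudes $\{t=2\}$ and $\{t=-2\}$ on $S^2_4$. Now $L(2,2)$, sitting at $t=0$, is automatically clear of both bad latitudes and needs no special treatment; only $L(1,3)$ and $L(3,1)$ (area class $1$ each) must be moved, which is straightforward. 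The same device makes the $\partial B(5)$ step symmetric as well. So the missing idea is to split the displacing flow into a $z$-part and a $w$-part.
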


\begin{proof} We argue similarly to Proposition \ref{con1}. To simplify the descriptions we assume $a>3$.

First we embed $B(a)$ by inclusion. Since $a<4$ the product Lagrangians lying inside $B(a)$ are $L(1,1)$, $L(1,2)$ and $L(2,1)$. Our goal is to find a Hamiltonian diffeomorphism of $B(R)$ which displaces $L(1,1) \cup L(1,2) \cup L(2,1)$ from $B(a)$, and which has support in the complement of the $L(k,l)$ with $k+l \in \{4, 5\}$. To do this, we first describe a Hamiltonian flow $\phi^t$ displacing $L(1,1) \cup L(1,2) \cup L(2,1)$ from $B(a)$, then a second Hamiltonian diffeomorphism $\psi$ with compact support in $B(R) \setminus B(a)$ and such that $\psi(L(k,l))$ is disjoint from the trace $\phi^t(L(1,1) \cup L(1,2) \cup L(2,1))$ for all $0 \le t \le 1$ and $k+l \in \{4, 5\}$. Then the Hamiltonian flow $\psi^{-1} \circ \phi^t \circ \psi$ will be as required.

\vspace{0.1in}

{\bf 1. Construction of $\phi$.}

\vspace{0.1in}

Let $H(z)$ be a Hamiltonian function with compact support in $D(R-1)$ such that the corresponding diffeomorphism displaces $D(2)$ from $D(a-1)$. For the existence of such $H$ we are using the assumption that $2 + (a-1) < R-1$. We may further assume that the trace $\phi_H^t(D(1))$ for $0 \le t \le 1$ intersects $\partial D(3) \cup \partial D(4)$ only near $z=\sqrt{ 3 / \pi}$ and $z=\sqrt{ 4 / \pi}$  on the real axis, see Figure \ref{f3}.

\begin{figure}[h!]
\vspace{.2cm}
\centering
\includegraphics[width=120mm,trim={0cm 0cm 0cm 0cm},clip]{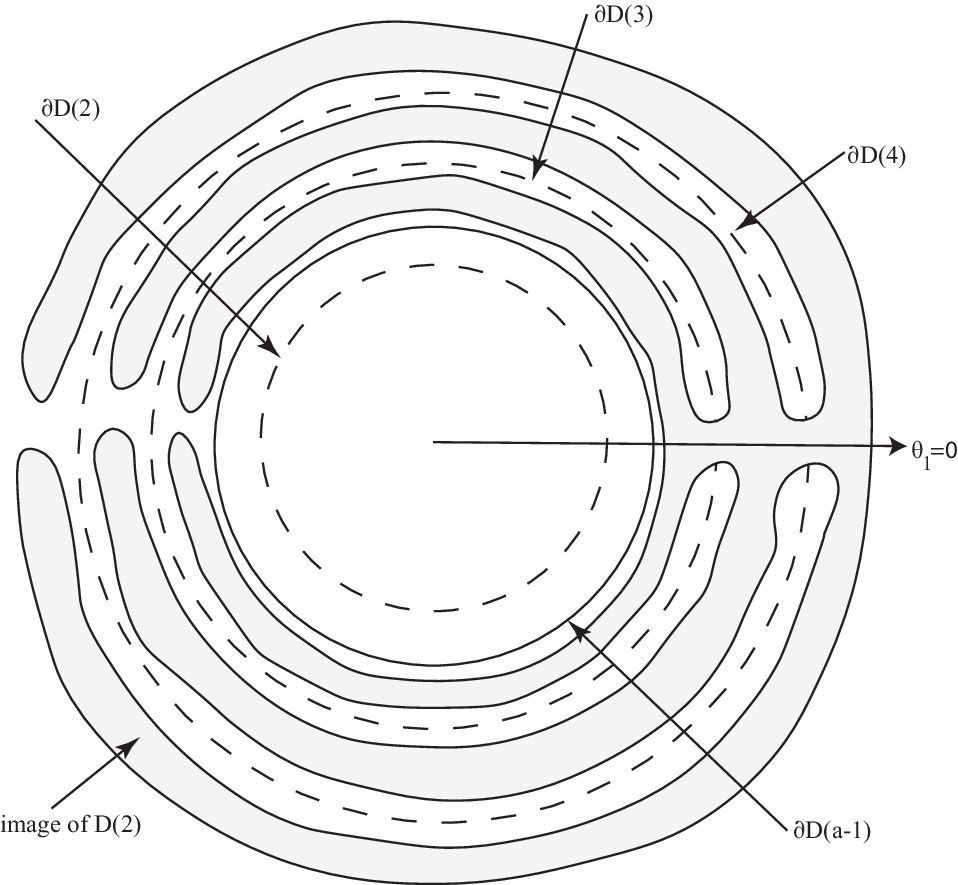}
\caption{The image of $D(1)$ under $\phi_H$.}
\label{f3}
\end{figure}

Next we move to $\CC^2$ and let
$\phi_1^t$ be the flow generated by the Hamiltonian function $H(z,w) = H(z)$. This satisfies $$\phi_1^1(L(1,1) \cup L(2,1)) \subset \{R_1 > a-1, \, R_2=1\}$$ and $\phi_1^t(L(1,1) \cup L(2,1)) \cap (\partial B(4) \cup \partial B(5))$ lies in a neighborhood of $\{ \theta_1=0\}$, where we are using symplectic polar coordinates as above.

Similarly, let $\phi_2^t$ be the flow generated by a Hamiltonian function $H(z,w) = H(w)$ such that $$\phi_2^1(L(1,2)) \subset \{R_2 > a-1, \, R_1=1\}$$ and $\phi_2^t(L(1,2)) \cap (\partial B(4) \cup \partial B(5))$ lies in a neighborhood of $\{ \theta_2=0\}$. Since the trace $\phi_2^t(L(1,2)) \subset \{R_1=1\}$ and $\phi_1^1(L(1,1) \cup L(2,1)) \subset \{R_1 > a-1 \}$, we can follow $\phi_1^t$ then $\phi_2^t$ to find a single (time dependent) Hamiltonian flow $\phi^t$ where the trace of $\phi^t(L(1,1) \cup L(2,1))$ coincides with $\phi^t_1(L(1,1) \cup L(2,1))$ and the trace of $\phi^t(L(1,2))$ coincides with $\phi^t_2(L(1,2))$.

\vspace{0.1in}

\newpage

{\bf 2. Construction of $\psi$.}

\vspace{0.1in}

Define cylinders $$T_1 =   \{R_1=1, \theta_2=0\}, \, \, T_2 =\{R_2=1, \theta_1=0\}.$$ Then $\{ \phi^t_2(L(1,2)) \}$ intersects $\partial B(4)$ and $\partial B(5)$ only near $T_1$, and $\{ \phi^t_1(L(1,1) \cup L(2,1)) \}$ intersects $\partial B(4)$ and $\partial B(5)$ only near $T_2$. 
We aim to find Hamiltonian diffeomorphisms $\psi_1$ and $\psi_2$ supported near $\partial B(4)$ and $\partial B(5)$ respectively, so that $$\psi_1(L(1,3) \cup L(2,2) \cup L(3,1)) \cap (T_1 \cup T_2) = \emptyset$$ and $$\psi_2(L(1,4) \cup L(2,3) \cup L(3,2) \cup L(4,1)) \cap (T_1 \cup T_2) = \emptyset.$$

\vspace{0.1in}

{\bf 2a. The map $\psi_1$.}

\vspace{0.1in}

As above we consider the symplectic reduction $p : \partial B(4) \to S$, where $S$ is now a sphere of area $4$. Now this means we quotient by the characteristic circles $\Gamma_{t, c} = \{R_1 - R_2 = t, \theta_1 - \theta_2 = c\}$ where $t \in [-4,4]$ and $c \in \RR / \ZZ$. We can use polar coordinates $(t,c)$ on $S$, so the reduced symplectic form is $\frac 12 dt \wedge dc$. The product Lagrangians $L(a,b)$ project to circles $\{t=a-b\}$ and Hamiltonian isotopies of these circles lift to Hamiltonian isotopies of the tori $L(a,b)$. If $k+l=4$, the projection of $L(k,l)$ is the circle $\{ t = k-l \}$.

We apply a Hamiltonian diffeomorphism mapping $\{t=-2\}$ and $\{t = 2\}$ to circles $\Gamma_1$ and $\Gamma_2$ as shown in Figure \ref{f4}. We see that $\Gamma_1$ and $\Gamma_2$ bound disks $D_1$ and $D_2$ of area $1$, and, topologically, $A= \{-2 < t < 2\} \setminus (D_1 \cup D_2)$ is an annulus which we may assume contains the image of the circle $\{t=0\}$ under our Hamiltonian diffeomorphism. Based on Figure \ref{f4} we may also assume $\Gamma_1 \subset \{-\frac14 < c < \frac 14\}$ and intersects $\{t< -2\}$ only near $c=0$. Similarly $\Gamma_2 \subset \{\frac14 < c < \frac 34\}$ and intersects $\{t>2\}$ only near $c=\frac 12$.

\begin{figure}[h!]
\vspace{.2cm}
\centering
\includegraphics[width=120mm,trim={0cm 0cm 0cm 0cm},clip]{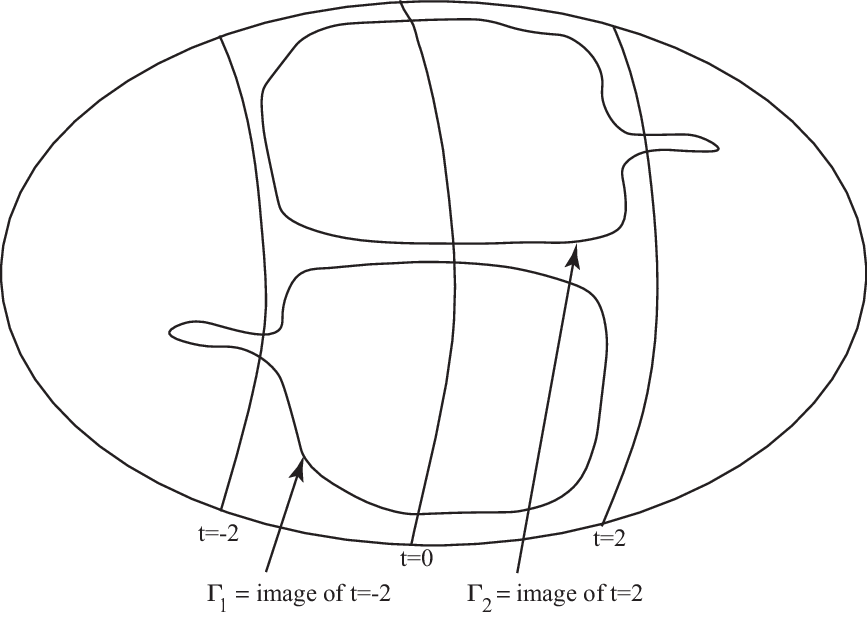}
\caption{Hamiltonian isotopy in the symplectic reduction of $\partial B(4)$.}
\label{f4}
\end{figure}

Lifting the Hamiltonian diffeomorphism, of the images of the three integral Lagrangians $L(k,l)$ with $k+l=4$, only $\LL_1 = p^{-1}(\Gamma_1)$, the image of $L(1,3)$, intersects $T_1$ and only $\LL_2 = p^{-1}(\Gamma_2)$, the image of $L(1,3)$, intersects $T_2$. The image of $L(2,2)$ projects to the image of the circle $\{ t=0 \}$ inside the annulus $A$.

To displace $\LL_1$ from $T_1$ we apply a Hamiltonian diffeomorphism generated by a function $G_1(\theta_1)$, so the Hamiltonian vector field is $-G_1'(\theta_1)\partial_{R_1}$. We may assume $G_1'(\theta_1) = - \epsilon$ when $\theta_1$ is close to $0$ and $G_1'(\theta_1) \le 0$ when $-\frac14 < \theta_1 < \frac 14$.
First note that as $\LL_1 \subset \{R_2>1\}$ and the flow is parallel to the $R_1$ direction, $\LL_1$ remains disjoint from $T_2$.

As $\theta_2$ is an invariant of the motion, to check displacement from $T_1$ we need only consider points
$p \in \LL_1$ with $\theta_2(p)=0$. If $p$ lies in the region $\{ R_1 \le 1 \}$ then it lies on the preimage of the thin strip. Then as $c$ is close to $0$ this means $\theta_1(p)$ is also close to $0$ and the Hamiltonian flow moves $p$ a distance $\epsilon$ in the positive $R_1$ direction, displacing $p$ from $T_1$. In general $\LL_1 \subset  \{-\frac14 < c < \frac 14\}$ and so $\theta_2(p)=0$ implies $\theta_1(p) \in [-\frac14, \frac14]$, and hence the $R_1$ coordinate is nondecreasing under the flow, that is, points $p \in \{R_1>1\}$ remain disjoint from $T_1$. 

To displace $\LL_2$ from $T_2$ we argue in the same way. Apply a Hamiltonian diffeomorphism generated by a function $G_2(\theta_2)$, where $G_2'(\theta_2) = - \epsilon$ when $\theta_2$ is close to $\frac 12$ and $G_1'(\theta_2) \le 0$ when $\frac14 < \theta_2 < \frac 34$. 
As the Hamiltonian flow is parallel to the $R_2$ direction, the flow of $\LL_2$ remains disjoint from $T_1$. For intersections with $T_2$ we consider
points $q \in \LL_2$ with $\theta_1(q)=0$. If $q$ lies in the region $R_2 \le 1$ then it lies on the preimage of the thin strip. As $c$ is close to $\frac 12$ here, this means $\theta_2(q)$ is close to $\frac 12$, and the Hamiltonian flow moves $q$ a distance $\epsilon$ in the positive $R_2$ direction, displacing $q$ from $T_2$. In general we have $q \in \{\frac14 < c < \frac 34\}$ and so $\theta_1(q)=0$ implies $\theta_2(q) \in [\frac14, \frac34]$. Hence the $R_2$ coordinate is nondecreasing under the flow, and points $q \in \{ R_2 \ge 1\}$ remain disjoint from $T_2$.

Finally we observe that these flows applied to $\LL_1$ and $\LL_2$ do not create new intersections between the Lagrangians. Indeed, as $\theta_1$ and $\theta_2$ are invariants of the flow, $\LL_1$ and $\LL_2$ remain disjoint. But any points which are moved by the flow are moved either only in the $R_1$ direction (for $\LL_1$) or only in the $R_2$ direction (for $\LL_2$), and thus remain disjoint from the image of $L(2,2)$ (which lies in $\partial B(4)$).

\vspace{0.1in}

{\bf 2b. The map $\psi_2$.}

\vspace{0.1in}

Here we consider the symplectic reduction $p : \partial B(5) \to S$, so $S$ is a sphere of area $5$. As usual this means we quotient by the characteristic circles $\Gamma_{t, c} = \{R_1 - R_2 = t, \theta_1 - \theta_2 = c\}$ where $t \in [-5,5]$ and $c \in \RR / \ZZ$. 
The product Lagrangians $L(a,b)$ project to circles $\{t=a-b\}$. 
Hence, if $k+l=5$, the projection of $L(k,l)$ is the circle $\{ t = k-l \}$.

We apply a Hamiltonian diffeomorphism mapping $\{t=-3\}$ and $\{t = 3\}$ to circles $\Gamma_1$ and $\Gamma_2$ as shown in Figure \ref{f5}. We see that $\Gamma_1$ and $\Gamma_2$ bound disks $D_1$ and $D_2$ of area $1$, and, topologically, $A= \{-3 < t < 3\} \setminus (D_1 \cup D_2)$ is an annulus of area slightly greater than $1$. (The excess is the area of the thin strips of $D_1$ and $D_2$ which intersect the regions $\{ t<-3\}$ and $\{t >3\}$.) Hence we may assume $A$ contains the image of the annulus $\{-1<t<1\}$ under our Hamiltonian diffeomorphism.

\begin{figure}[h!]
\vspace{.2cm}
\centering
\includegraphics[width=120mm,trim={0cm 0cm 0cm 0cm},clip]{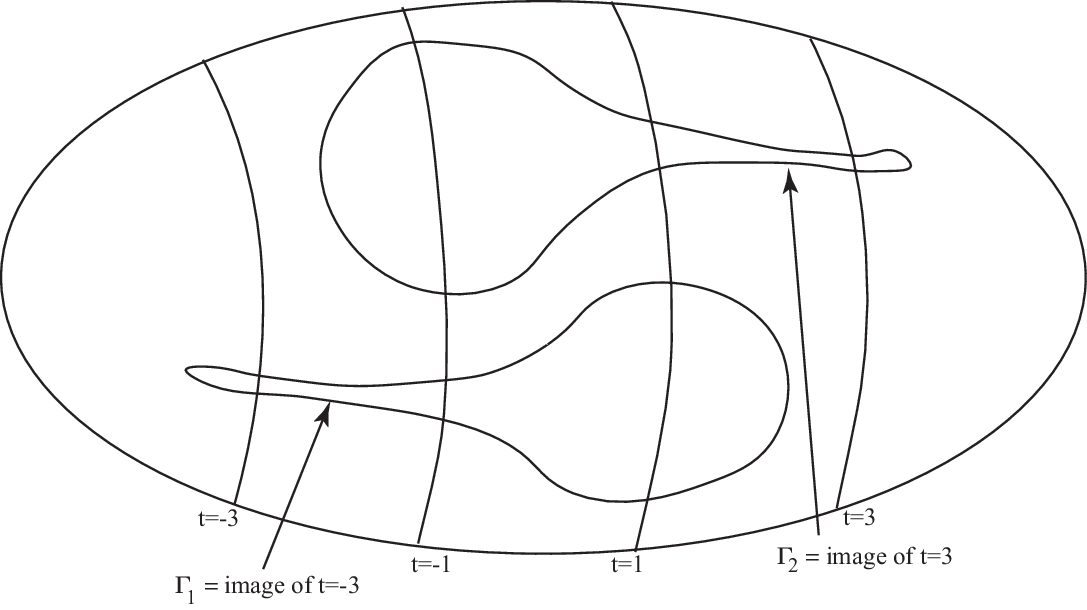}
\caption{Hamiltonian isotopy in the symplectic reduction $\partial B(5)$.}
\label{f5}
\end{figure}

Lifting the isotopy, let $\LL_{k,l}$ be the image of $L(k,l)$ with $k+l=5$. Then only $\LL_{1,4}$ intersects $T_1$ and only $\LL_{4,1}$ intersects $T_2$. The argument now follows as before. We can move $\LL_{1,4}$ slightly in the $R_1$ direction to displace it from $T_1$ and $\LL_{4,1}$ slightly in the $R_2$ direction to displace it from $T_2$. As points are displaced from $\partial B(5)$ this leaves $\LL_{1,4}$ and $\LL_{4,1}$ disjoint from $\LL_{2,3} \cup \LL_{3,2}$, and $\LL_{1,4}$ and $\LL_{4,1}$ remain disjoint since they are only slightly perturbed. 
\proofend
\end{proof}


\section{Obstructions}

\subsection{Balls avoiding a single Lagrangian}

We start with a general bound which in particular implies the upper bounds for lines 1, 2, 3 and 6 of Theorem \ref{main}.

\begin{proposition}\label{ob1} $c_G(B(R) \setminus \cl) \le \min(2\lceil \frac{R}{3} \rceil, \, R - \lfloor \frac{R}{3} \rfloor)$.
\end{proposition}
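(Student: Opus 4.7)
My plan is to reduce Proposition \ref{ob1} to the single-torus obstruction
\begin{equation}
c_G(B(R)\setminus L(k,k))\le \max(2k,\, R-k) \quad \text{for every } k\in\NN \text{ with } 2k<R. \tag{$\star$}
\end{equation}
Applying $(\star)$ with $k=\lceil R/3\rceil$ (so $R\le 3k$) gives $\max(2k, R-k)=2\lceil R/3\rceil$, and applying $(\star)$ with $k=\lfloor R/3\rfloor$ (so $R\ge 3k$) gives $\max(2k, R-k)=R-\lfloor R/3\rfloor$. Since $L(k,k)\subset\cl$ in each case, the minimum of the two bounds upper bounds $c_G(B(R)\setminus\cl)$, which is the proposition. (When $L(\lceil R/3\rceil,\lceil R/3\rceil)\not\subset B(R)$ the first bound exceeds $R$ and is trivial.)

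To establish $(\star)$, I would compactify $B(R)\hookrightarrow\mathbb{CP}^2(R)$ so that the line class $[H]$ has area $R$. The torus $L(k,k)$ then bounds three Maslov $2$ disk classes $D_1,D_2,D_3\in H_2(\mathbb{CP}^2,L(k,k))$ of areas $k,k,R-2k$, summing to $[H]$. Given $\phi:B(a)\hookrightarrow B(R)\setminus L(k,k)$, let $q$ be the image of the center of $B(a)$ and choose an $\omega$-tame almost complex structure $J$ on $\mathbb{CP}^2(R)$ which is standard on $\phi(B(a))$ and integrable near $L(k,k)$ (so the axis disks are $J$-holomorphic). The standard index calculation gives virtual dimension $\mu-3$ for the moduli of Maslov $\mu$ disks through a prescribed interior point of $\mathbb{CP}^2$, so Maslov $2$ disks through $q$ are generically absent whereas Maslov $4$ disks through $q$ form a one-parameter family. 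Each Maslov $4$ class has the form $D_i+D_j$, with area $R-k$ when the sum involves $D_3$ and area $2k$ when it equals $D_1+D_2$; either way the area is at most $\max(2k, R-k)$.

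Monotonicity of $J$-holomorphic curves then forces $a\le\max(2k, R-k)$: the Maslov $4$ disk meets the center of $\phi(B(a))$, and since its boundary lies on $L(k,k)\subset \mathbb{CP}^2(R)\setminus\phi(B(a))$, the component of the disk through $q$ has boundary on $\partial\phi(B(a))$ and hence area at least $a$. The main obstacle will be guaranteeing existence of the required Maslov $4$ disk and ruling out limiting degenerations (bubbling into Maslov $2$ disks plus sphere bubbles, or escape of the one-parameter family) that would break the monotonicity step. I would handle this via a neck-stretching argument along $\partial\phi(B(a))$: a $J$-holomorphic sphere in class $[H]$ through $q$ limits to a plane of area $a$ in $\phi(B(a))$ glued to a punctured curve of area $R-a$ in the complement, and a careful analysis of the asymptotic Reeb orbits on $\partial\phi(B(a))$ together with the Maslov indices of the pieces forces the complementary curve to contain a Maslov $4$ disk on $L(k,k)$ of area at most $\max(2k, R-k)$, completing $(\star)$.
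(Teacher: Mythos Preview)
Your reduction to the single-torus bound $(\star)$ and the choice of $k=\lfloor R/3\rfloor$, $k=\lceil R/3\rceil$ is exactly what the paper does, so the outer structure is fine.

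The gap is in your proof of $(\star)$. The existence of a Maslov~$4$ holomorphic disk through $q$ with boundary on $L(k,k)$ is not established. Since $L(k,k)\subset\CC P^2(R)$ is monotone only when $R=3k$, there are Maslov~$0$ relative classes of nonzero area, so the usual cobordism/compactness arguments that would produce a Maslov~$4$ disk through a generic interior point do not apply directly; bubbling can occur and the one-parameter family need not close up. Your proposed fix compounds the problem: stretching the neck along $\partial\phi(B(a))$ produces a building whose outside component is a punctured sphere asymptotic to Reeb orbits on the round $3$-sphere, not a disk with boundary on $L(k,k)$. There is no mechanism by which that complementary curve ``contains a Maslov~$4$ disk on $L(k,k)$'', because $L(k,k)$ never appears as a boundary condition in that stretching.

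The paper's argument for $(\star)$ avoids this entirely by reversing the roles of the two hypersurfaces. Instead of stretching along the ball, it blows up $\overline{B(a)}$ to obtain $X=\CC P^2(R)\#\overline{\CC P^2}$ with exceptional class $E$ of area $a$, and uses the genuine foliation of $X$ by $J$-holomorphic spheres in the class $G-E$ (area $R-a$), which exists for every tame $J$. Then it stretches the neck along $L(k,k)$. A leaf through a fixed point of $L(k,k)$ limits to a pair of Maslov~$2$ disks in $X\setminus L(k,k)$; by positivity of intersection exactly one of them misses the line at infinity. That disk $D$ either misses $E$ too, in which case it is a Maslov~$2$ disk in $\CC^2$ of area $k$, forcing $k<R-a$; or it meets $E$ once, in which case blowing down shows it has area $2k-a>0$, forcing $a<2k$. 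Either way $a<\max(2k,R-k)$. The point is that the stretching is along $L(k,k)$, so the limiting pieces automatically have boundary there, and the blow-up converts your monotonicity step into a clean intersection number with $E$.
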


Let $n$ be an integer with $n < \frac{R}{2}$. Then $L(n,n) \subset B(R)$. We will show the following, then Proposition \ref{ob1} follows by applying the lemma to $n = \lfloor \frac{R}{3} \rfloor$ and $n= \lceil \frac{R}{3} \rceil$.

\begin{lemma} Suppose we have a symplectic embedding $B(a) \hookrightarrow B(R) \setminus L(n,n)$. Then $a \le \max(2n, R-n)$.
\end{lemma}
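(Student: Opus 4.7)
The plan is to use a holomorphic curve argument in a symplectic blow-up. Compactify $B(R) \subset \CP^2(R)$ so that $L(n,n)$ becomes a Lagrangian torus in the closed symplectic $4$-manifold $\CP^2(R)$, and extend the ball embedding to $B(a) \hookrightarrow \CP^2(R) \setminus L(n,n)$. Blowing up along $B(a)$ yields $\widetilde X = \CP^2(R) \# \overline{\CP^2}$ with line class $[H]$ of area $R$, exceptional class $[E]$ of area $a$, and $L(n,n) \subset \widetilde X \setminus E$ still Lagrangian.

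The relevant curves are $J$-holomorphic disks with boundary on $L(n,n)$ of Maslov index $4$. The Maslov $2$ disk classes in $H_2(\CP^2(R), L(n,n))$ are $D_1, D_2$ (the coordinate disks of area $n$) and $H - D_1 - D_2$ (the ``outer'' disk of area $R - 2n$). Their sums yield two Maslov $4$ classes whose areas match the two terms of the bound: $[D_1 + D_2]$ of area $2n$, and $[H - D_i]$ of area $R - n$.

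The main step is to show that, for a generic compatible $J$ on $\widetilde X$, at least one of the proper transform classes $[D_1 + D_2] - [E]$ (area $2n - a$, Maslov $2$) or $[H - D_i] - [E]$ (area $R - n - a$, Maslov $2$) admits a $J$-holomorphic disk representative with boundary on $L(n,n)$. Positivity of area then forces $a \le 2n$ or $a \le R - n$, hence $a \le \max(2n, R - n)$. Existence of such a representative would follow from a Gromov compactness / SFT continuation argument, starting from explicit holomorphic representatives of $[D_1 + D_2]$ and $[H - D_i]$ in the un-blown-up $\CP^2(R)$ (the nodal union of coordinate disks, and the ``outside'' disk on a line $\{w = c\}$ with $\pi|c|^2 = n$) and tracking them through the blow-up.

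The principal technical obstacle is the compactness analysis: possible bubbling of the lower-Maslov disks $D_1, D_2, H - D_1 - D_2$ (of areas $n, n, R-2n$) must be controlled, as must nodal degenerations into reducible configurations. That the bound is $\max(2n, R - n)$ rather than the stronger $\min$ reflects the fact that, depending on whether $R < 3n$ or $R > 3n$, only one of the two Maslov $4$ candidate classes yields a non-degenerate $J$-holomorphic disk through $E$ while the other fully breaks into lower-area pieces that do not constrain $a$; the surviving class is always the one of larger area.
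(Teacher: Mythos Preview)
Your setup matches the paper's exactly: compactify $B(R)$ to $\CP^2(R)$, blow up the embedded $B(a)$ to get $X$ with classes $H$ (area $R$) and $E$ (area $a$), and look for a Maslov~$2$ holomorphic disk in $X$ with boundary on $L(n,n)$ whose area is either $2n-a$ or $(R-n)-a$. Positivity of area then gives the bound. So the target object is correct.

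The gap is in how you \emph{produce} that disk. You propose to start from Maslov~$4$ disks in $\CP^2$ (in classes like $D_1+D_2$ or $H-D_i$) and ``track them through the blow-up'' via a continuation/compactness argument, but you then acknowledge that the compactness analysis is the principal obstacle and do not carry it out. The heuristic in your last paragraph---that the dichotomy is governed by whether $R<3n$ or $R>3n$, with the larger-area class always surviving---is not how the argument actually runs. Which case occurs depends on the particular embedding of $B(a)$, not on a numerical comparison of $R$ and $n$, and there is no clean continuation principle for open disk invariants that would let you conclude one of your two specific classes persists.

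The paper sidesteps all of this by working with closed curves instead of disks. For any tame $J$ on $X$, there is a foliation of $X$ by $J$-holomorphic spheres in the class $H-E$ (area $R-a$); this is classical and requires no bubbling analysis relative to $L(n,n)$. Now stretch the neck along $L(n,n)$ and look at the limit of the leaf through a fixed point of $L(n,n)$. The limiting building consists of two Maslov~$2$ planes asymptotic to opposite geodesics. Since $(H-E)\cdot H=1$, positivity of intersection forces exactly one plane to meet the line at infinity; call the other one $D$. Then $D$ is a Maslov~$2$ disk in the affine part, and either $D\cap E=\emptyset$ (so $D$ is a Maslov~$2$ disk in $\CC^2$, area $n$, hence $n< R-a$) or $D\cdot E=1$ (so blowing down gives a Maslov~$4$ disk in $\CC^2$ of area $2n$, hence $D$ has area $2n-a>0$). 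In either case $a<\max(2n,R-n)$.

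In short: rather than trying to continue disk moduli, start from the robust $H-E$ sphere foliation and let neck-stretching manufacture the disk for you.
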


\begin{proof} We will establish a strict inequality for embeddings which extend to a closed ball $\overline{B(a)}$. This implies the weak inequality for open balls by a restriction.

We compactify the ball $B(R)$ by including it as the affine part of a copy of $\CC P^2$ with lines of area $R$, and then blow up the embedded copy of $\overline{B(a)}$. The resulting symplectic manifold $(X, \omega)$ has $H_2(X, \ZZ)$ generated by the class $G$ of a line and the exceptional divisor $E$. Note that $\omega(G) = R$ and $\omega(E)=a$. Then for any compatible almost complex structure we have a foliation of $X$ by holomorphic spheres in the class $G-E$, hence of area $R - a$. We may assume our almost complex structures make both the line at infinity in $\CC P^2$ and the exceptional divisor complex.

We perform a neck stretching along $L = L(n,n)$. Taking a limit of holomorphic spheres intersecting a fixed point on $L$, the result is a holomorphic building which generically contains two disks in $X \setminus L$ of Maslov class $2$, asymptotic to the same geodesic with opposite orientation. The disks fit together to give a representative of the class $G - E$, and since $(G - E) \cdot G=1$, by positivity of intersection exactly one of these disks intersects the line at infinity. Let $D$ be the other disk, which therefore sits in a single blow up of the ball.

There are two cases. If $D \cap E = \emptyset$ then the Maslov $2$ condition implies it has area $n$. As it appears as part of a limit of spheres of area $R - a$ we see that $a < R - n$. Alternatively $D \cap E = 1$ and the Maslov $2$ condition implies $D$ has area $2n - a$. Hence we have $a < 2n$ and the result follows.
\proofend
\end{proof}

\subsection{Balls avoiding two Lagrangians}

Here we prove the following, which gives the upper bound for line 4 of Theorem \ref{main}, thus completing its proof.

\begin{proposition}\label{int} A symplectic embedding $\overline{B(3)} \hookrightarrow B(5)$ must intersect $L(1,2) \cup L(2,2)$.
\end{proposition}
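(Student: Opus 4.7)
I would argue by contradiction, adapting the single-Lagrangian framework of Proposition~\ref{ob1} to neck-stretch simultaneously along both tori. Assume $\overline{B(3)} \hookrightarrow B(5) \setminus (L_1 \cup L_2)$ where $L_1 := L(1,2)$ and $L_2 := L(2,2)$. Compactify $B(5)$ to $\mathbb{C}P^2$ of line area $R = 5$ and blow up the image of $\overline{B(3)}$ to obtain $X = \mathbb{C}P^2 \# \overline{\mathbb{C}P^2}$, with exceptional divisor $E$ of area $a = 3$ and $L_1, L_2$ disjoint Lagrangian tori in $X \setminus (H \cup E)$. For generic $\omega$-compatible $J$ making $H$ and $E$ holomorphic, $X$ is foliated by $J$-holomorphic spheres in class $G - E$ of area $R - a = 2$.

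First I would tabulate the areas of the four Maslov $2$ ``polygon'' classes $(m_1, m_2, h, e) \in \pi_2(X, L(k,l))$ via $\omega = m_1 k + m_2 l + h R - e a$: on $L_1$ these are $1, 2, 2, 0$, so the through-$E$ class $D_1 = (1,1,0,1)$ is \emph{degenerate} (no holomorphic representative); on $L_2$ they are $2, 2, 1, 1$, all non-degenerate. Note that $L_1$ is topologically pinned by the identity $k+l = 3 = a$, which places it on the ``$E$-edge'' in the idealized moment picture, forcing $L_1$ to lie between $L_2$ and $E$ in any admissible embedding.

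Applying the monotone single-Lagrangian analysis to $L_2$, the $G - E$ sphere through a generic $p \in L_2$ degenerates in the SFT limit into a building whose top level consists of two Maslov $2$ disks on $L_2$: the through-$H$ disk $C_2 = (-1,-1,1,0)$ and the through-$E$ disk $D_2 = (1,1,0,1)$, each of area $1$, glued along a $(1,1)$-geodesic. I would then show that the through-$E$ disk $D_2$ must intersect $L_1$: geometrically, $D_2$ sweeps a surface from $L_2$ to $E$, and the topological pinning of $L_1$ obstructs any such sweep. Simultaneous neck-stretching along $L_1$ then further degenerates $D_2$, inserting an additional $L_1$-asymptote. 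Closing the enriched building up to total class $G - E$ and area $2$ requires an $L_1$-component whose $H_2(X)$-contribution is $-E$; the only polygon Maslov $2$ candidate is $D_1$, which is degenerate. Higher-Maslov $L_1$-disks and multiple covers fail the area budget (each would cost at least area $2$ against a budget of $2-1=1$ remaining after $C_2$), mixed $L_1$/$L_2$ bridge components in $X \setminus (L_1 \cup L_2)$ are ruled out by an index count, and non-polygon Maslov $2$ classes like $(-1,0,1,1)$ on $L_1$ are excluded by a Cho--Auroux-type rigidity. Hence no admissible SFT limit exists, contradicting the existence of the foliation.

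The main obstacle I anticipate is the crucial geometric step: showing that $D_2$ must meet $L_1$ in every embedding avoiding both Lagrangians. I expect this requires combining the area identity $k+l = a$ for $L_1$ with a careful moduli-theoretic analysis of the $D_2$-family as the stretching parameter varies, perhaps by a second SFT limit or an intersection-positivity argument between the pencil of through-$E$ disks on $L_2$ and the Lagrangian $L_1$. Ruling out the exotic building types noted above is also delicate, though the low area budget of $2$ and the tight Fredholm bookkeeping greatly restrict the possibilities.
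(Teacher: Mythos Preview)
Your approach has a genuine gap at precisely the step you flag as the main obstacle, and in fact the gap cannot be closed along the lines you suggest. The paper itself carries out the simultaneous neck-stretch of the $(G-E)$-foliation along $L_1 \cup L_2$ and finds that a consistent limiting finite-energy foliation of $X \setminus (L_1 \cup L_2)$ \emph{does} exist: generic leaves remain unbroken spheres of area $2$, and each broken leaf is asymptotic to exactly one of the two tori, with both component planes of area $1$ lying in $X \setminus (L_1 \cup L_2)$. For leaves broken on $L_2$ one obtains your $C_2$ and $D_2$; for leaves broken on $L_1$ one obtains a plane in $X \setminus (\Sigma \cup E)$ with boundary class $(1,0)$ together with a plane through \emph{both} $\Sigma$ and $E$ --- precisely the class $(-1,0,1,1)$ you propose to exclude ``by a Cho--Auroux-type rigidity''. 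That class is not excluded; it is what actually occurs. Hence there is no SFT obstruction at the fiber-class level, and your claim ``$D_2$ must meet $L_1$'' fails for the fully stretched almost-complex structure. Your ``topological pinning'' heuristic ($k+l=3=a$ forcing $L_1$ between $L_2$ and $E$) is a feature of the standard toric model only and carries no force for an arbitrary embedding of $\overline{B(3)}$, which is exactly what is at issue.

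The paper's route is entirely different and substantially more involved. After recording the broken-leaf structure above (which for it is input, not a contradiction), it passes to \emph{high degree} sections in the classes $[\Sigma + dF]$ and $[\Sigma + (d-1)F]$ for $d \gg 0$, analyzes their SFT limits into essential curves $D^A, S^B, D^C$ plus $2d$ (resp.\ $2d-2$) broken planes, and computes the areas of the essential pieces. It then deforms these buildings to smooth symplectic spheres $F$ and $G$ with prescribed intersections with the four solid tori $\frak r_0, \frak r_\infty, \frak s_0, \frak s_\infty$ of broken planes, and performs $2d$ blow-ups at the points of $F \cap G$ followed by blow-downs of the corresponding fibers to produce a \emph{new} copy of $\CC P^2 \# \overline{\CC P^2}$. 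In this new manifold both $L_1$ and $L_2$ become homologically nontrivial and relatively exact in the complement of $\hat F \cup \hat G \cup \hat T_0 \cup \hat T_\infty$, which embeds in $T^*T^2$. The contradiction finally comes from Gromov's theorem that an exact Lagrangian in $T^*T^2$ must intersect the zero section (after a Hamiltonian isotopy sending $L_1$ to the zero section), not from any area or index bookkeeping on fiber-class buildings.
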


The proof of this occupies the remainder of the paper. It was shown by Boustany in \cite{karim} that an integral Lagrangian torus in the monotone blow up of $\CC P^2$ must intersect the Clifford torus. In our language, this in particular implies all integral Lagrangian tori in $B(3) \setminus B(1)$ must intersect $L(1,1)$. We can think of Proposition \ref{int} also as a Lagrangian intersection result. Namely, both $L(1,2)$ and $L(2,2)$ can separately be displaced from the ball $B(3)$ inside $B(5)$. However, if we denote their images by $L_1, L_2 \subset B(5) \setminus B(3)$ then we must have $L_1 \cap L_2 \neq \emptyset$. This result does not seem to follow formally from Boustany's, but our proof follows closely.

\subsubsection{Outline.}

We argue by contradiction and suppose there exists an embedded ball $\overline{B(3)} \subset B(5) \setminus (L(1,2) \cup L(2,2))$.

We will blow up the embedded ball $B(3)$ and compactify $B(5)$ to obtain a one point blow up $X$ of $\CC P^2$ with lines of area $5$ and exceptional divisor $E$ of area $3$. Given an almost complex structure $J$, this can be represented as an $S^2$ bundle over the line at infinity $\Sigma$, where the fibers are $J$ holomorphic spheres intersecting each of  $\Sigma$ and $E$ exactly once.

The first step is to examine behavior of this foliation under neck stretching along $L(1,2) \cup L(2,2)$. A $1$ dimensional family of spheres degenerate into `broken leaves', which consist of a pair of Maslov $2$ planes asymptotic to the same geodesic, with opposite orientations, on one of the tori. We are able to determine the homology classes of these broken planes.

Next we study sections of our $S^2$ bundle with very high degree, and how these sections behave under neck stretching. The main result here can be taken from \cite{karim}.

The limiting sections can be deformed to produce smooth sections $F$ and $G$, which after a blowing up and blowing down procedure transform to disjoint sections in a new copy of $\CC P^2  \# \overline{ \CC P^2}$. The complement of the union of these two sections plus two fibers contains our two Lagrangian tori, and can be embedded in $T^* T^2$. We will see that the Lagrangians are relatively exact in the cotangent bundle, which gives a contradiction since they remain disjoint.

\subsubsection{Disks with boundary on $L(1,2) \cup L(2,2)$.}

We start by recording facts about Maslov classes of disks with boundary on $L(1,2)$ or $L(2,2)$. We use the natural bases of $H_1(L(1,2)) \cong \ZZ^2$ and  $H_1(L(2,2)) \cong \ZZ^2$ coming from their product structures.

\begin{lemma} \label{mas} Let $J$ be an almost complex structure on $\CC^2$ tamed by the standard symplectic form.
\leavevmode
\begin{enumerate}
\item Maslov $2$ disks in $\CC^2$ with boundary on $L(1,2)$ and area $1$ have boundary in the class $(1,0)$;\\
\item disks in $\CC^2$ with Maslov class $2n$ and boundary on $L(1,2)$ in a class $(k,1)$ have area $n+1$;\\
\item disks in $\CC^2$ with Maslov class $2n$ and boundary on $L(1,2)$ in a class $(k,-1)$ have area $n-1$;\\
\item  there are no Maslov $4$ $J$-holomorphic disks in $\CC^2$ with area $4$ and embedded boundary on $L(1,2)$;\\
\item  disks in $\CC^2$ with Maslov class $2n$ and boundary on $L(2,2)$ in a class $(k, k \pm 1)$ have area $2n$;\\
\item  $J$-holomorphic Maslov $4$ disks in $\CC^2$ with embedded boundary on $L(2,2)$ have area $4$ and boundary in the class $(1,1)$.\\
\end{enumerate}
\end{lemma}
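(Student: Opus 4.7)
The proof rests on two elementary facts about a product Lagrangian $L(k,l) \subset \CC^2$ with the natural basis $(e_1, e_2)$ of $H_1(L(k,l)) \cong \ZZ^2$ coming from the factor circles. First, a class $(a,b)$ has Maslov index $2(a+b)$. Second, since $\CC^2$ is simply connected with vanishing $\pi_2$, the symplectic area of any smooth disc with boundary in class $(a,b)$ is well-defined and equals $ak + bl$, as one verifies on the obvious product representatives of $e_1$ and $e_2$. Items (1), (2), (3), and (5) then reduce to direct algebra: the hypothesis $\mu = 2n$ forces $a+b = n$, and combined with the specified boundary class (or area constraint) the formula $ak + bl$ returns the claimed value. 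For (5) the area reduces to $2a + 2b = 2n$ automatically on $L(2,2)$, regardless of which primitive form $(k, k \pm 1)$ is chosen.

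For (4), the two linear conditions $a+b = 2$ and $a + 2b = 4$ on $L(1,2)$ force $(a,b) = (0,2)$. Since $\gcd(0,2) = 2$, this class is not primitive in $H_1(T^2)$ and so cannot be represented by a simple closed curve; hence no disc with embedded boundary lies in this class, and in fact no pseudoholomorphicity is needed for this step.

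For (6), item (5) immediately gives area $4$. Embeddedness of $\partial u$ forces primitivity of the class $(a,b)$ with $a+b = 2$, leaving the list $(1,1), (3,-1), (-1,3), (5,-3), (-3,5), \ldots$; the task is to rule out the classes with a negative entry. I plan to argue by positivity of intersection with the coordinate axes $A_z = \{z=0\}$ and $A_w = \{w=0\}$. The algebraic intersection number $u \cdot A_z$ is a relative homotopy invariant determined by the class and equal to $a$ (as one computes on the standard representatives). For the standard structure $J_{std}$ both axes are $J_{std}$-holomorphic, and positivity of intersection together with the $J_{std}$-holomorphicity of $u$ gives $a, b \ge 0$, leaving $(1,1)$ as the unique primitive option. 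To promote this to an arbitrary tame $J$, I would deform through a path $J_t$ of tame structures from $J$ to $J_{std}$ and apply Gromov compactness: no sphere bubbles arise since $\CC^2$ is exact, and a Maslov $4$ disc can only degenerate into two Maslov $2$ discs, whose $J_{std}$-classes can only be $(1,0)$ or $(0,1)$. Thus any bubbled total class has nonnegative entries, so a disc $u$ in a class with a negative entry can neither bubble nor persist to a $J_{std}$-disc, a contradiction.

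The main obstacle is precisely this persistence-versus-bubbling dichotomy in (6): one must control the parameterized moduli space of discs, including handling the bubble configurations that may appear at non-generic intermediate $J_t$ where Maslov $2$ classes other than $(1,0)$ and $(0,1)$ could host tame-holomorphic discs. A technically cleaner route, at the mild cost of restricting the admissible $J$ in the surrounding applications, is to arrange $J$ to coincide with $J_{std}$ in a neighborhood of $A_z \cup A_w$; this makes both axes $J$-holomorphic and allows direct positivity of intersection of $u$ with each axis, yielding $a, b \ge 0$ and hence $(a,b) = (1,1)$.
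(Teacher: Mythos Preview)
Your treatment of (1)--(5) is correct and matches the paper, which simply declares these ``easily checked.'' Your argument for (4) via non-primitivity of $(0,2)$ is clean and indeed requires no holomorphicity.

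For (6) your strategy is the same as the paper's: deform $J$ to $J_{\mathrm{std}}$ through tame structures and invoke positivity of intersection with the coordinate axes to force $a,b \ge 0$. You correctly locate the obstacle---bubbling at an intermediate $J_t$, where there is no a priori reason the Maslov~$2$ bubbles should lie in $(1,0)$ or $(0,1)$---but you do not close it, and your proposed workaround (fixing $J$ near the axes) alters the hypotheses.

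The paper closes exactly this gap with an observation you did not exploit: $L(2,2)$ is \emph{monotone}. A class $(a,b)$ has area $2a+2b$ and Maslov index $2(a+b)$, so area equals Maslov index; in particular every Maslov~$2$ disc has area~$2$, the minimal positive value, and hence Maslov~$2$ moduli spaces are compact across the entire family $\{J_t\}$. Now if the original Maslov~$4$ disc with $k<0$ degenerates at some $J_{t_0}$ into two Maslov~$2$ pieces with boundary classes $(k_1,l_1)$ and $(k_2,l_2)$, then $k_1+k_2=k<0$ forces one $k_i<0$. By the compactness just noted, that Maslov~$2$ disc persists without further degeneration all the way to $J_1=J_{\mathrm{std}}$, where positivity of intersection with $\{z=0\}$ gives the contradiction. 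This monotonicity step is the missing idea that makes your deformation argument work for arbitrary tame~$J$.
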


\begin{proof} We focus on statement (6), the remaining statements are easily checked.

Suppose then we have a $J$ holomorphic Maslov $4$ plane in $\CC^2$ with boundary on $L(2,2)$ in the class $(k,l)$. The Maslov $4$ condition implies $k+l=2$. Then as the boundary is embedded we must have $(k,l)=(1,1)$ if both $k,l \ge 0$. We argue by contradiction assuming, say, that $k<0$.

We find a family of almost-complex structures $J_t$ interpolating between $J_0 = J$ and the standard product structure $J_1$. If the Maslov $4$ planes persist to give a $J_1$ holomorphic plane then as $k<0$ the plane must intersect $\{z_1=0\}$ negatively, a contradiction as the axis is complex with respect to $J_1$.

Alternatively, the plane must degenerate, and as $L(2,2)$ admits no holomorphic planes of Maslov class $0$ (since they would have area $0$) the Maslov $4$ plane bubbles into either a pair of Maslov $2$ planes, or perhaps a Maslov $2$ plane and a cylinder of Fredholm index $2$.

In the first case,
one of the planes must be asymptotic to an orbit $(k_1, l_1)$ with $k_1<0$. But by monotonicity moduli spaces of Maslov $2$ planes are compact, so we again deduce the existence of a $J_1$ holomorphic plane with $k_1<0$. This gives a contradiction as before.

In the second case, both the cylinder and plane have area $2$, which is minimal, and so they cannot degenerate further. We deduce the existence of a building with an unmatched orbit in the original class $(k,l)$. This is enough for a contradiction.

\proofend
\end{proof}

\subsubsection{Foliations by holomorphic spheres and neck stretching.}

Returning to Proposition \ref{int}, we argue by contradiction and assume there exists an embedding $\overline{B(3)} \hookrightarrow B(5)$ with image disjoint from $L(1,2) \cup L(2,2)$.

We can blow-up the image of $B(3)$ and carry out a symplectic reduction on the boundary $\partial B(5)$ to arrive at a copy of $X = \CC P^2 \# \overline{ \CC P^2}$ where the exceptional divisor $E$ has area $3$ and the line at infinity $\Sigma$ has area $5$. By results of McDuff \cite{mcd} all such symplectic manifolds are symplectomorphic. We work with almost complex structures $J$ which are of a fixed standard form near $E$ and $\Sigma$, in particular such that $E$ and $\Sigma$ are complex. It is known that, for any such almost complex structure, $X$ is foliated by $J$-holomorphic spheres in the class $[F] = [\Sigma - E]$. The spheres have area $2$. By positivity of intersection, each sphere in the class intersects each of $\Sigma$ and $E$ exactly once, positively and transversally. Thus a foliation leads to a projection map $p_J : X \to \Sigma$, mapping a point to the intersection of its leaf with $\Sigma$.

Stretching the neck along $L(1,2) \cup L(2,2)$ our foliation converges to a limiting finite energy foliation of $X \setminus (L(1,2) \cup L(2,2))$. Generic leaves are still embedded holomorphic spheres, but now there is a codimension $1$ set of broken leaves. A broken leaf consists of a pair of finite energy planes which are asymptotic to the same geodesics on either $L(1,2)$ or $L(2,2)$, but with the opposite orientation. The planes have area $1$ and compactify to give Maslov $2$ disks. (Here we exclude the possibility of more complicated degenerations because all curves have integral area.) By positivity of intersection, in a broken leaf exactly one of the planes intersects $\Sigma$ and exactly one intersects $E$. If we blow down $E$ then the plane intersecting it transforms to a plane of area $4$ and Maslov class $4$. Together with Lemma \ref{mas}, this imposes strong restrictions on the broken planes, which we describe in the following lemma.

\newpage

\begin{lemma}\label{broken}
\leavevmode
\begin{enumerate}
\item The broken leaves asymptotic to $L(1,2)$ consist of one plane intersecting both $\Sigma$ and $E$ and another plane lying in $X \setminus (\Sigma \cup E)$. The planes are asymptotic to geodesics in the class $(\pm 1,0)$;\\
\item The broken leaves asymptotic to $L(2,2)$ consist of one plane intersecting $\Sigma$ and a second plane intersecting $E$. The planes are asymptotic to geodesics in the class $\pm(1,1)$.\\
\end{enumerate}
\end{lemma}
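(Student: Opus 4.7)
My plan is to combine positivity of intersection (with $\Sigma$ and $E$) and the total Maslov/area constraints of the fiber class $[F]=[\Sigma-E]$ with Lemma \ref{mas}, enumerating the possible boundary classes and intersection multiplicities of the two planes forming a broken leaf. Concretely, for each plane $D$ set $s:=D\cdot\Sigma\ge 0$, $\tilde e:=D\cdot E\ge 0$ and write $[D]\in H_2(X,L)$ in the natural basis coming from a product-disk lift of its boundary class $(m,n)\in H_1(L(k,l))$ together with $[\Sigma]$ and $[E]$. A short blow-down computation then yields
\begin{align*}
\omega(D)&=km+ln+5s-3\tilde e,\\
\mu(D)&=2(m+n)+6s-2\tilde e.
\end{align*}
As established in the paragraph preceding the lemma, each plane satisfies $\omega(D)=1$ and $\mu(D)=2$. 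Combined with $s_1+s_2=1$, $\tilde e_1+\tilde e_2=1$, and $(m_2,n_2)=-(m_1,n_1)$ (the two planes being asymptotic to the same geodesic with opposite orientations), the lemma reduces to a short enumeration.

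For part (1), with $L=L(1,2)$, the two equations combine to $n=2(\tilde e-s)$ and $m=1-s-\tilde e$, so the only pairs $((s_1,\tilde e_1),(s_2,\tilde e_2))$ respecting the summation constraints are $((0,0),(1,1))$ and $((0,1),(1,0))$. The second produces boundary classes $\pm(0,2)$, which are not primitive; since a generic broken leaf is asymptotic to a simple (embedded) closed geodesic, this is ruled out. Equivalently, blowing down $E$ turns the plane with $(s,\tilde e)=(0,1)$ into a Maslov-$4$, area-$4$ $J$-holomorphic disk in $\CC^2$ with embedded boundary on $L(1,2)$, explicitly excluded by Lemma \ref{mas}(4). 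What remains is $((0,0),(1,1))$: one plane lies in $X\setminus(\Sigma\cup E)$ with boundary class $(1,0)$, and the other intersects both $\Sigma$ and $E$ exactly once with boundary class $(-1,0)$, which is precisely the claim.

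For part (2), with $L=L(2,2)$, the two equations collapse to $s+\tilde e=1$, forcing $(s_1,\tilde e_1)=(0,1)$ and $(s_2,\tilde e_2)=(1,0)$ (up to swapping), with $m_1+n_1=2$ and $m_2+n_2=-2$. Blowing down $E$ sends the first plane to a $J$-holomorphic disk in $\CC^2$ of Maslov $4$, area $4$, with embedded boundary on $L(2,2)$ in a class of coordinate sum $2$; Lemma \ref{mas}(6) then pins it down to $(m_1,n_1)=(1,1)$, and the opposite-orientation constraint gives $(m_2,n_2)=(-1,-1)$.

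The main obstacle I anticipate is not the bookkeeping above but rather the preparatory step of showing that a broken leaf really does consist of just two Maslov-$2$, area-$1$ planes, with no additional sphere bubbles, extra punctures, or multiply covered components. This is the role of the integrality remark in the paragraph preceding the lemma, together with a codimension argument ruling out Maslov-$0$ asymptotic components generically; once these are in hand, the classification follows from the finite enumeration performed above.
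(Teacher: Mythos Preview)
Your proposal is correct and takes essentially the same approach as the paper: both arguments combine positivity of intersection with $\Sigma$ and $E$, the total Maslov/area constraints of the fiber class, and Lemma~\ref{mas}(1), (4), (5), (6) to pin down the broken-plane configurations. The paper presents this more tersely as a direct case analysis (blow down the plane hitting $E$ but not $\Sigma$ and invoke the relevant part of Lemma~\ref{mas}) rather than via your explicit $(s,\tilde e,m,n)$ bookkeeping, but the underlying content is identical.
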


\begin{proof} For part (1), arguing by contradiction, if there exists a broken plane intersecting $E$ but not $\Sigma$ then it can be blown down to give a holomorphic Maslov $4$ disk of area $4$ in $X \setminus \Sigma \subset \CC^2$. This contradicts Lemma \ref{mas} (4). Therefore one plane in a broken leaf avoids both $E$ and $\Sigma$, and hence represents a Maslov $2$ class in $\CC^2$. By Lemma \ref{mas} (1) the boundary represents the class $(1,0)$.

For part (2), we recall from Lemma \ref{mas} (5) that there are no Maslov $2$ disks in $\CC^2$ with boundary on $L(2,2)$ and area $1$. The first part of the statement follows. Given this, one of the broken planes intersects $E$ but not $\Sigma$, and can be blown down to give a Maslov $4$ disk in $\CC^2$. The second part of the statement then comes from Lemma \ref{mas} (6).
\proofend
\end{proof}

\subsubsection{Notation and the solid tori of broken planes.}

Following \cite{hk2}, we will denote the union of broken planes asymptotic to $L(1,2)$  and disjoint from $E$ and $\Sigma$ by $\frak{r}_0$, and the union of planes asymptotic to $L(1,2)$ and intersecting $E$ and $\Sigma$ by $\frak{r}_{\infty}$. The union broken planes asymptotic to $L(2,2)$ and intersecting $E$ is denoted $\frak{s}_0$ and the union of broken planes asymptotic to $L(2,2)$ and intersecting  $\Sigma$ is denoted $\frak{s}_{\infty}$. We have that each of $\frak{r}_0$, $\frak{r}_{\infty}$, $\frak{s}_0$ and $\frak{s}_{\infty}$ is a solid $3$ dimensional torus $S^1 \times D^2 \subset X$.

Also, we write $\Gamma = p_J(L(1,2)) = p_J(\frak{r}_0 \cup \frak{r}_{\infty})$ and $\Lambda = p_J(L(2,2)) = p_J(\frak{s}_0 \cup \frak{s}_{\infty})$. These are disjoint embedded circles in $\Sigma$, and divide $\Sigma$ into three regions: a disk $A$ with $\partial A = \Gamma$, a cylinder $B$ with $\partial B = \Gamma \cup \Lambda$ and a disk $C$ with $\partial C = \Lambda$. We fix points $p_0 \in A$ and $p_{\infty} \in C$ and set $T_0 = p_j^{-1}(p_0)$ and $T_{\infty} = p_J^{-1}(p_{\infty})$. Then $T_0$ and $T_{\infty}$ are holomorphic spheres in the class $[F]$, that is, they are unbroken leaves of our foliation.

The complement $X \setminus (E \cup \Sigma \cup T_0 \cup T_{\infty})$ is symplectomorphic to a bounded subset of the cotangent bundle $T^* T^2$. Under this identification, $L(2,2)$ becomes a homologically nontrivial Lagrangian torus in $T^* T^2$, but because $\frak{r}_0$ is disjoint from $E$ and $\Sigma$, and from $T_0$ and $T_{\infty}$, the torus $L(1,2) \subset T^* T^2$ is homologically trivial. Our goal is to find a new set of axes in a new copy of $\CC P^2 \# \overline{ \CC P^2}$, such that our two tori are both homologically nontrivial in the complement. We will eventually arrive at a contradiction to Gromov's theorem that exact Lagrangians in a cotangent bundle must intersect the zero section.

\subsubsection{High degree holomorphic spheres.}

Let $d \in \NN$ and $J$ an almost complex structure on $X$ as above. For any generic collection of $2d+2$ points in $X$ we can find a unique $J$-holomorphic sphere in the class $[\Sigma + dF]$ intersecting the $2d+2$ points. We can place $d+1$ points on $L(1,2)$ and $d+1$ points on $L(2,2)$ and take a limit of the holomorphic spheres as we perform a neck stretching of $J$ along $L(1,2) \cup L(2,2)$. The result is a holomorphic building we denote by $\mathbf{F}$. We have the following, as in \cite{karim} Proposition 5.7.

\begin{proposition}\label{buildingF} Let $d$ be very large. Then $\mathbf{F}$ has $2d+3$ (top level) curves in $X \setminus (L(1,2) \cup L(2,2))$. These consist of
\begin{enumerate}
\item a Maslov $2$ plane $D^A$ projecting injectively to $A$;\\
\item a Maslov $2$ plane $D^C$ projecting injectively to $C$;\\
\item a holomorphic cylinder $S^B$ of Fredholm index $2$, projecting injectively to $B$;\\
\item $d$ broken planes asymptotic to $L(1,2)$;\\
\item $d$ broken planes asymptotic to $L(2,2)$.\\
\end{enumerate}
\end{proposition}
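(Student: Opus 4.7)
The plan is to adapt the proof of \cite{karim}, Proposition 5.7, to the present setting. Three ingredients are combined: the auxiliary projection $p_J \colon X \to \Sigma$ coming from the unperturbed $[F]$-foliation, SFT compactness of the stretched limit $\mathbf{F}$, and the area and Maslov constraints of Lemma \ref{mas}.

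First I would study the projection $p_J$ applied to $\mathbf{F}$. Before stretching, any $J$-holomorphic sphere in $[\Sigma + dF]$ meets a generic $[F]$-fiber in $(\Sigma + dF) \cdot F = 1$ point, so $p_J$ restricted to such a sphere is a degree one map to $\Sigma$. Passing to the SFT limit, positivity of intersection with unbroken $[F]$-leaves forces each of the three regions $A$, $B$, $C$ of $\Sigma \setminus (\Gamma \cup \Lambda)$ to be covered exactly once, and injectively, by a top-level component of $\mathbf{F}$. Since $A$ and $C$ are disks, the corresponding components are planes, which we call $D^A$ and $D^C$; since $B$ is a cylinder, the corresponding component $S^B$ is itself a cylinder with one asymptotic end on $L(1,2)$ and one on $L(2,2)$. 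The Fredholm index of $S^B$ equals $2$, matching the dimension of the one-parameter family of leaves sweeping out $B$.

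Second I would pin down $D^A$ and $D^C$ using Lemma \ref{mas}. Since $A$ is disjoint from $\Lambda$ and from $p_J(E)$ and $p_J(\Sigma)$ (other than along its boundary $\Gamma$), the plane $D^A$ lies in $X \setminus (E \cup \Sigma)$, which contains $L(1,2)$ and is symplectomorphic to an open subset of $\CC^2$. Lemma \ref{mas}(1) then forces $D^A$ to be a Maslov $2$ plane of area $1$ with boundary class $(1,0)$. Similarly $D^C \subset X \setminus (E \cup \Sigma)$ is a Maslov $2$ plane of area $2$ on $L(2,2)$ by Lemma \ref{mas}(5).

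Third, the remaining top-level components of $\mathbf{F}$ absorb all remaining marked asymptotic ends. The $d+1$ marked points on $L(1,2)$ produce $d+1$ asymptotic ends, and SFT matching doubles this to $2(d+1)$ ends on $L(1,2)$; $D^A$ and $S^B$ account for one end each, so $2d$ ends remain to be distributed among further top-level components. By Lemma \ref{broken}(1) combined with the area budget of $[\Sigma + dF]$, these $2d$ ends can only organize themselves into $d$ broken pairs asymptotic to $L(1,2)$ of the type described in Lemma \ref{broken}. A parallel analysis on $L(2,2)$, using Lemma \ref{broken}(2) together with Lemma \ref{mas}(6) to rule out excess Maslov $4$ components, yields exactly $d$ broken pairs asymptotic to $L(2,2)$. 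The main obstacle is this last exclusion step: ruling out exotic degenerations such as higher-Maslov planes absorbing several marked ends, cylinders with both ends on the same Lagrangian, or multiply-covered broken pairs. These exclusions follow from Lemma \ref{mas}(4) and (6) together with the rigidity imposed by the integrality of areas, mirroring the argument in \cite{karim}.
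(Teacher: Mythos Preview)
Your overall strategy of following \cite{karim}, Proposition 5.7, is the right one --- the paper itself gives no independent proof and simply cites that result. But two of your steps contain real errors.

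In step 2 you assert that $D^A \subset X \setminus (E \cup \Sigma)$ because ``$A$ is disjoint from $p_J(E)$ and $p_J(\Sigma)$.'' This is false: both $E$ and $\Sigma$ are \emph{sections} of the fibration $p_J$, so $p_J(E) = p_J(\Sigma) = \Sigma$, and in particular both cover all of $A$. There is no a~priori reason $D^A$ avoids $E$ or $\Sigma$, and your conclusion that $D^A$ has area~$1$ directly contradicts Lemma~\ref{essa}, which proves $\mathrm{area}(D^A)=2$ and whose argument explicitly carries along intersection numbers $m$, $n$ of $D^A$ with $\Sigma$ and $E$. Note also that Lemma~\ref{mas} takes the Maslov class as \emph{input}; the Maslov~$2$ property of $D^A$ and $D^C$ is not extracted from Lemma~\ref{mas} but from Fredholm index and genericity arguments, as in \cite{hk2} and \cite{karim}.

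In step 3 your end-counting is not how the argument works. The $d+1$ marked points on $L(1,2)$ are point constraints, not asymptotic ends, and the sentence ``SFT matching doubles this to $2(d+1)$ ends'' has no content. What actually happens is that each generic point constraint on the Lagrangian forces a component of the limit building through that point; for $d$ large and generic point placement, index and area bookkeeping forces all but a bounded number of these components to be broken planes (i.e.\ leaves of the limiting foliation). The counting is an index/area budget argument as in \cite{hk2}, not a matching of ends to marked points.
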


Using Lemma \ref{mas} we can determine the areas of the curves $D^A$, $S^B$ and $D^C$. (These are called essential curves in \cite{hk2}, Definition 3.12.),

\begin{lemma}\label{essa} $\mathrm{area}(D^A) =2$, $\mathrm{area}(S^B) =1$, $\mathrm{area}(D^C) =2$.
\end{lemma}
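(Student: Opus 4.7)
The plan is to compute the total symplectic area of the building, identify the contribution of the $2d$ broken planes, and then separately bound each of $\mathrm{area}(D^A)$, $\mathrm{area}(D^C)$, $\mathrm{area}(S^B)$ from below until the area identity forces equality.

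First I would record the general area/Maslov formula in $X$. For a disk $D$ in $X$ with boundary class $(k,l)$ on $L(a,b)$ and intersection numbers $\sigma = D \cdot \Sigma$, $\epsilon = D \cdot E$, comparing $D$ with a reference disk in $\CC^2$ of the same boundary (whose area is $ak+bl$ and whose Maslov index is $2(k+l)$) and using $\omega([\Sigma])=5$, $\omega([E])=3$, $c_1(X)\cdot[\Sigma]=3$, $c_1(X)\cdot[E]=1$, one gets
\[
\mathrm{area}(D) = ak + bl + 5\sigma - 3\epsilon, \qquad \mu_L(D) = 2(k + l + 3\sigma - \epsilon).
\]
Plugging in the data of Lemma \ref{broken} for each of the four types of broken planes (for instance, an $\frak{r}_\infty$-plane has boundary $(-1,0)$ on $L(1,2)$ with $\sigma=\epsilon=1$, giving area $-1+5-3=1$) confirms each broken plane has area exactly $1$. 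Since the class $[\Sigma+dF]$ has area $5+2d$ and there are $2d$ broken planes, this yields
\[
\mathrm{area}(D^A) + \mathrm{area}(D^C) + \mathrm{area}(S^B) = 5.
\]

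Next I would pin down the boundary classes of $D^A$ and $D^C$. Because $D^A$ projects injectively onto $A$, it is a section of $p_J$ over the disk $A$, so its asymptotic geodesic on $L(1,2)$ projects to $\partial A = \Gamma$ with degree $\pm 1$; since $p_J|_{L(1,2)}$ is a circle bundle whose fibers are the $(\pm 1, 0)$-geodesics (Lemma \ref{broken}), the asymptotic class must have the form $(k, \pm 1)$ for some $k\in\ZZ$. Substituting this into the Maslov-$2$ relation $k+l+3\sigma-\epsilon=1$ and the area formula gives $\mathrm{area}(D^A) = 2(\sigma - \epsilon) + 2$ when $l=1$ and $2(\sigma - \epsilon)$ when $l=-1$, so positivity forces $\mathrm{area}(D^A) \ge 2$ in either case. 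An analogous analysis for $D^C$, using that $p_J|_{L(2,2)}$ collapses the $(1,1)$-geodesics so that the boundary class is $(k, k\pm 1)$, yields $\mathrm{area}(D^C) = 2-\sigma-\epsilon$, and the parity constraint coming from $k \in \ZZ$ (namely $\sigma+\epsilon$ even) together with positivity pins $\sigma=\epsilon=0$ and $\mathrm{area}(D^C)=2$ exactly.

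Finally, every area in the building is an integer combination of $1, 2, 3, 5$ and hence an integer, so the holomorphic cylinder $S^B$ satisfies $\mathrm{area}(S^B) \ge 1$. Combined with $\mathrm{area}(D^A) + \mathrm{area}(S^B) = 5 - \mathrm{area}(D^C) = 3$ and $\mathrm{area}(D^A) \ge 2$, this forces $\mathrm{area}(D^A)=2$ and $\mathrm{area}(S^B)=1$. The main obstacle in the plan is justifying the boundary-class constraints $(k,\pm 1)$ and $(k, k\pm 1)$: this requires unpacking Lemma \ref{broken} to describe $p_J$ restricted to each Lagrangian explicitly as the $S^1$-quotient by the broken-leaf foliation direction, so that a section of $p_J$ over $A$ (resp.\ $C$) necessarily has boundary covering $\Gamma$ (resp.\ $\Lambda$) with degree one.
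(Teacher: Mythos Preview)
Your proposal is correct and follows essentially the same route as the paper: both arguments set up the area identity $\mathrm{area}(D^A)+\mathrm{area}(S^B)+\mathrm{area}(D^C)=5$, use the injective projection together with Lemma~\ref{broken} to force the boundary classes $(k,\pm 1)$ on $L(1,2)$ and $(k,k\pm 1)$ on $L(2,2)$, and then eliminate $k$ via the Maslov-$2$ condition to get $\mathrm{area}(D^A)\in 2\ZZ_{>0}$ and $\mathrm{area}(D^C)=2-\sigma-\epsilon$. Your parity observation ($\sigma+\epsilon$ even from $k\in\ZZ$) is exactly the paper's case check that $(m,n)=(0,1),(1,0)$ admit no integer $k$, and your final step (integrality of $\mathrm{area}(S^B)$) is interchangeable with the paper's use of the evenness of $\mathrm{area}(D^A)$ together with $\mathrm{area}(S^B)>0$.
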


\begin{proof}
We recall from Lemma \ref{broken} that the broken leaves of our foliation asymptotic to $L(1,2)$ are asymptotic to geodesics in the class $(\pm 1, 0)$. Then since
the plane $D^A$ is asymptotic to $L(1,2)$ and projects injectively, its boundary has intersection number $\pm 1$ with the class$(1,0)$ and so lies in a class $(k, \pm 1)$ for some $k \in \ZZ$. First suppose $D^A$ is asymptotic to an orbit in a class $(k,1)$, intersects $\Sigma$ a total of $m$ times and intersects $E$ a total of $n$ times. Then $K=D^A - m \Sigma + nE$ represents a relative homology class in $\CC^2$ with Maslov class $\mu(K) = 2 - 6m + 2n$. By Lemma \ref{mas}(2) it has area $$\mathrm{area}(K) = \mathrm{area}(D^A) - 5m + 3n = 2 - 3m + n$$ and so $\mathrm{area}(D^A) = 2+2m - 2n$ is even. If $D^A$ is asymptotic to an orbit in a class $(k,-1)$ and intersects $\Sigma$ a total of $m$ times and $E$ a total of $n$ times, then with $K$ as before, Lemma \ref{mas}(3) says it has area $$\mathrm{area}(K) = \mathrm{area}(D^A) - 5m + 3n = -3m + n.$$ Thus $\mathrm{area}(D^A) = 2m - 2n$ which again is even.

A similar analysis applies to $D^C$. This plane is asymptotic to $L(2,2)$, where the boundaries of broken planes represent the classes $\pm(1,1)$. The boundary of $D^C$ has intersection number $\pm 1$ with the boundary of broken planes, and so represents a class $(k, k + \epsilon)$, where $\epsilon = \pm 1$. Suppose $D^C$ intersects $\Sigma$ a total of $m$ times and intersects $E$ a total of $n$ times. Then by Lemma \ref{mas}(5) the class $K$ defined as above has area $$\mathrm{area}(K) = \mathrm{area}(D^C) - 5m + 3n = 2 - 6m + 2n$$ and so $\mathrm{area}(D^C) = 2 -m - n$. As holomorphic curves have positive area, and our intersections are positive, we see that either $m=n=0$ and $\mathrm{area}(D^C) = 2$, or else either $(m,n)=(0,1)$ or $(m,n)=(1,0)$. But if $(m,n)=(0,1)$ then $$2= \mu(D^C) = 2k + 2k + 2\epsilon -2$$ and if $(m,n)=(1,0)$ then $$2= \mu(D^C) = 2k + 2k + 2\epsilon + 6.$$ Since $\epsilon = \pm 1$ these equations have no solutions for $k$.

The $2d$ broken planes in the holomorphic building each have area $1$, while homology class $[\Sigma + dF]$ has symplectic area $5 + 2d$. We conclude that
$$5 + 2d =  \mathrm{area}(D^A) +  \mathrm{area}(S^B) + \mathrm{area}(D^C) +2d.$$ We have already shown $\mathrm{area}(D^A) \ge 2$ and $\mathrm{area}(D^C)=2$. Hence $D^A$ must have area $2$ exactly, and the holomorphic cylinder $S^B$ has area $1$.
\proofend
\end{proof}

We can repeat the limiting process above for spheres in the class $[\Sigma + (d-1)F]$ which intersect $d-1$ points on $L(1,2)$ and $d-1$ points on $L(2,2)$. Proposition \ref{buildingF} then gives a holomorphic building $\mathbf{G}$ of the same form, but now with $d-1$ broken planes asymptotic to each of $L(1,2)$ and $L(2,2)$.

Now we deform the buildings $\mathbf{F}$ and $\mathbf{G}$ to produce smooth symplectic spheres with controlled intersection properties. The result of this is described by \cite{karim}, Proposition 5.8.

\newpage

\begin{proposition}\label{deform} There exist smooth spheres $F$ and $G$ in the classes $[\Sigma + dF]$ and $[\Sigma + (d-1)F]$ respectively, with the following properties.
\begin{enumerate}
\item exactly one of $F$ or $G$ intersects the planes in $\frak{r}_0$ and the other intersects the planes in $\frak{r}_{\infty}$;\\
\item exactly one of $F$ or $G$ intersects the planes in $\frak{s}_0$ and the other intersects the planes in $\frak{s}_{\infty}$;\\
\item $F$ and $G$ intersect $d$ times positively and transversally in $p_J^{-1}(A)$ and $d$ times positively and transversally in $p_J^{-1}(C)$;\\
\item $F$ intersects $S^B$ positively in a single point while $G$ and $S^B$ are disjoint.\\
\end{enumerate}
\end{proposition}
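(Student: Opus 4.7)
The plan is to follow closely the proof of Proposition 5.8 in \cite{karim}. Both $F$ and $G$ are obtained by partially reversing the neck stretching that produced the buildings. Let $J_\tau$ denote the neck-stretching family of almost complex structures and let $F_\tau$, $G_\tau$ be the smooth $J_\tau$-holomorphic spheres in classes $[\Sigma + dF]$, $[\Sigma + (d-1)F]$ through the prescribed point constraints, so that $F_\tau \to \mathbf{F}$ and $G_\tau \to \mathbf{G}$ in the SFT sense. For a sufficiently large but finite parameter $\tau_0$, we set $F := F_{\tau_0}$ and $G := G_{\tau_0}$. By SFT compactness these are smooth symplectic spheres lying in small $C^0$-neighborhoods of the respective buildings, and the advertised intersection properties can be read off from the building structure together with positivity of intersection.

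For properties (3) and (4), the homological intersection number is $F \cdot G = (\Sigma + dF) \cdot (\Sigma + (d-1)F) = 2d$, with all intersections positive. Both buildings contain Maslov $2$ essential planes projecting injectively to $A$ and to $C$, so proximity forces the sheets of $F$ and $G$ over $A$ and over $C$ to cross transversally, and the intersections concentrate in $p_J^{-1}(A) \cup p_J^{-1}(C)$. Over $B$, by contrast, the essential cylinder $S^B$ belongs to $\mathbf{F}$ while $\mathbf{G}$ has no corresponding component meeting it, so $F$ contains an annulus close to $S^B$ whereas $G$ stays uniformly away. Positivity of intersection then yields (4)---$F$ meets the geometric cylinder $S^B$ transversally in a single positive point, and $G$ is disjoint from $S^B$---and forces the $2d$ intersections of (3) to split evenly as $d$ points in each of $p_J^{-1}(A)$ and $p_J^{-1}(C)$.

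For properties (1) and (2), each of $F$ and $G$ is a section of the original $[F]$-class foliation of $X$ (produced by neck stretching along $L(1,2) \cup L(2,2)$), since the identity $F \cdot [F] = G \cdot [F] = 1$ forces each sphere to meet every leaf, smooth or broken, in exactly one point. For a broken leaf the intersection lies on exactly one of its two planes, and continuity as the broken leaf varies along the connected circle $\Gamma$ forces the $\frak{r}_0/\frak{r}_\infty$ choice to be constant for each sphere; likewise along $\Lambda$ for $\frak{s}_0/\frak{s}_\infty$. The hard part, which I expect to be the main obstacle, is verifying that $F$ and $G$ land on \emph{opposite} sides. This requires tracking, in the passage to the SFT limit, which essential piece ($D^A$, $S^B$ or $D^C$) is glued to which broken plane along which asymptotic orbit: the difference of one fiber class between the homology classes of $F$ and $G$ must be accounted for by exactly one extra crossing of the broken family, forcing the two sections to choose opposite sides. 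This combinatorial bookkeeping is carried out in detail in \cite{karim} and should adapt directly to our setting.
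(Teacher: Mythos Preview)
Your sketch diverges from the paper's in a way that matters. The paper does \emph{not} take pre-limit curves $F_{\tau_0}$, $G_{\tau_0}$; instead it takes the limit buildings $\mathbf{F}$, $\mathbf{G}$ and \emph{smooths them by hand into the complement of $L(1,2)\cup L(2,2)$}, making a deliberate choice of which solid torus ($\frak{r}_0$ or $\frak{r}_\infty$, and $\frak{s}_0$ or $\frak{s}_\infty$) each smoothing is pushed to intersect. That choice is the heart of the argument for (1)--(3), and it is governed by a pigeonhole count: if $\mathbf{F}$ has $a_F$ broken planes in $\frak{r}_0$ and $b_F$ in $\frak{r}_\infty$ (so $a_F+b_F=d$), and $a_G+b_G=d-1$ for $\mathbf{G}$, then $(a_F+b_G)+(b_F+a_G)=2d-1$, so one of these sums is at least $d$. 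One pushes $F$ and $G$ to \emph{opposite} sides accordingly, and this manufactures at least $d$ intersections in $p_J^{-1}(A)$: each broken plane that was on the ``wrong'' side crosses the other smoothing after being pushed in. The same over $C$ gives at least $d$ intersections there, and then $[\Sigma+dF]\bullet[\Sigma+(d-1)F]=2d$ forces equality and no intersections over $B$. Property (4) follows from the self-intersection $[\Sigma+dF]^2=2d+1$ together with a separate count (\cite{hk2}, Lemma 3.29) that $F$ meets $\mathbf{F}$ exactly $d$ times over each of the closures of $p_J^{-1}(A)$ and $p_J^{-1}(C)$.

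Your pre-limit curves are what they are; you cannot choose which side they land on, so the pigeonhole mechanism is unavailable and the ``opposite sides'' conclusion has no source. Your mechanism for (3) is also off: the $d$ intersections over $A$ do not come from the essential planes $D^A$ of the two buildings crossing (those are single sections over $A$ and have no reason to meet $d$ times), but from the $d$ and $d-1$ broken planes being pushed into the same region. Finally, $\mathbf{G}$ \emph{does} have its own essential cylinder over $B$ --- Proposition~\ref{buildingF} applies verbatim to $\mathbf{G}$ --- so your stated reason for $G\cap S^B=\emptyset$ is incorrect; the actual reason is that the intersection budget of $2d$ is already exhausted over $A\cup C$.
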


The proof of Proposition \ref{deform} is quite lengthy, but can be understood intuitively. The idea is that $F$ is a smoothing of $\mathbf{F}$ into the complement of $L(1,2) \cup L(2,2)$ and $G$ is a similar smoothing of $\mathbf{G}$.

Suppose $\mathbf{F}$ has $a_F$ planes in $\frak{r}_0$ and $b_F$ planes in $\frak{r}_{\infty}$. By Proposition \ref{buildingF} we have $a_F + b_F = d$. Similarly,  $\mathbf{G}$ has $a_G$ planes in $\frak{r}_0$ and $b_G$ planes in $\frak{r}_{\infty}$, with $a_F + b_F = d-1$. Thus $(a_F + b_G) + (b_F + a_G) = 2d-1$ and either $a_F + b_G \ge d$ or $b_F + a_G \ge d$.

In the first case we deform $\mathbf{F}$ (thinking of the building as a subset of $X$, which is smooth away from $L(1,2) \cup L(2,2)$) such that it intersects the planes in $\frak{r}_{\infty}$ and the planar components of the building are pushed into $p_J^{-1}(A)$, and we deform $\mathbf{G}$ such that it intersects the planes in $\frak{r}_0$ and the planar components of the building are also pushed into $p_J^{-1}(A)$. Then the $a_F$ planar components of  $\mathbf{F}$ in $\frak{r}_0$ will intersect the deformation of $\mathbf{G}$, and the $b_G$ planar components of  $\mathbf{G}$ in $\frak{r}_{\infty}$ will intersect the deformation of $\mathbf{F}$. In total our deformed buildings will intersect at least $d$ times in $p_J^{-1}(A)$. In the second case we make an analogous deformation with the deformation of $\mathbf{F}$ intersecting $\frak{r}_0$ and the deformation of $\mathbf{G}$ intersecting $\frak{r}_{\infty}$. Again this will produce at least $d$ intersections in $p_J^{-1}(A)$. The same calculation can be used to push the buildings $\mathbf{F}$ and $\mathbf{G}$ away from $L(2,2)$ giving $d$ intersections in $p_J^{-1}(C)$.

Now, after perhaps perturbing $J$ slightly, there exist holomorphic spheres $F$ and $G$ which coincide with the deformations of $\mathbf{F}$ and $\mathbf{G}$ away from their singular sets. As the relevant homology classes satisfy $[\Sigma + dF] \bullet [\Sigma + (d-1)F] = 2d$ we deduce that in fact there are exactly $d$ intersections in $p_J^{-1}(A)$, exactly $d$ in $p_J^{-1}(C)$, and none in $p_J^{-1}(B)$. Since $F$ is a deformation of $\mathbf{F}$ this last count implies that $G$ must be disjoint from $S^B$.

Finally, a calculation based on $D^A$ and $D^C$ having Maslov class $2$ implies that $F$ may be assumed to intersect $\mathbf{F}$ precisely $d$ times in the closures of $p_J^{-1}(A)$ and $p_J^{-1}(C)$, see \cite{hk2} Lemma 3.29. This includes the intersections of $F$ with broken planes asymptotic to $L(1,2)$ and $L(2,2)$. As $[\Sigma + dF] \bullet [\Sigma + dF] = 2d+1$, we see that $F$ must intersect $S^B$ exactly once.

\subsubsection{Blowing up and down.}

Let $H_1, \dots, H_{2d}$ be the fibers of our $J$ holomorphic foliation through the points $F \cap G$. By Proposition \ref{deform} exactly $d$ of these fibers project to $A$ and $d$ project to $C$. We can blow up $2d$ small balls, say of capacity $\delta$, around the points $F \cap G$ and then blow down the transforms $\hat{H}_i$ of the corresponding fibers. We arrive at a new copy $Y$ of $\CC P^2 \# \overline{ \CC P^2}$ containing the proper transforms $\hat{F}$ and $\hat{G}$ of $F$ and $G$. Now $\hat{F} \bullet \hat{F} = 2d+1 - 2d =1$ and $\hat{G} \bullet \hat{G} = 2d-1 - 2d =-1$. The proper transforms $\hat{T}_0$ and $\hat{T}_{\infty}$ of the fibers $T_0$ and $T_{\infty}$ have self intersection $0$ and lie in the class $[\hat{F} - \hat{G}]$. Appealing again to \cite{mcd}, we can think of $\hat{F}$ as the line at infinity, of area $2d(1-\delta) + 5$ and $\hat{G}$ as the exceptional divisor, of area $2d(1-\delta) +3$, in $Y$. Then the complement $Z:= Y \setminus (\hat{F} \cup \hat{G} \cup \hat{T}_0 \cup \hat{T}_{\infty})$ can be identified with a subset of $T^* T^2$. As $L(1,2) \cup L(2,2) \subset Z$ we can consider $L(1,2)$ and $L(2,2)$ now also as tori in $T^* T^2$.

\subsubsection{Intersection argument.}

It follows from properties (1) and (2) in Proposition \ref{deform} that $L(1,2)$ and $L(2,2)$ become homologically nontrivial in $Z$ and also $T^* T^2$, see \cite{karim}, Lemma 4.11 and Proposition 6.3. We claim they are relatively exact, that is, any $2$ dimensional surface in $T^* T^2$ with boundary on $L(1,2) \cup L(2,2)$ has area $0$.

We can find a plane $P_1$ in either $\frak{r}_0$ or $\frak{r}_{\infty}$ and a second plane $P_2$ in either $\frak{s}_0$ or $\frak{s}_{\infty}$, both of which intersect $\hat{F}$ but are disjoint from $\hat{G} \cup \hat{T}_0 \cup \hat{T}_{\infty}$. Up to isotopy we may assume $P_1$ and $P_2$ coincide on a disk $D$ intersecting $\hat{F}$. Then $(P_1 \setminus D) - (P_2 \setminus D)$ is a cycle in $T^* T^2$ with boundary representing the class $\pm (1,0)$ in $L(1,2)$ and $\pm (1,1)$ in $L(2,2)$. As both $P_1$ and $P_2$ have area $1$ the cycle has area $0$.

Next we consider the cylinder $S^B$. This has area $1$ by Lemma \ref{essa}, and again intersects $\hat{F}$ while avoiding $\hat{G} \cup \hat{T}_0 \cup \hat{T}_{\infty}$. Adding a copy of $-P_1$, we can find another cycle in $T^* T^2$ of area $0$ with boundary on $L(1,2) \cup L(2,2)$, but whose boundary components now represent classes transverse to the foliation class. This implies $L(1,2)$ and $L(2,2)$ are indeed relatively exact.

Our proof concludes as follows. First, by \cite{DGI}, Theorem B, we can apply a Hamiltonian isotopy of $T^* T^2$ mapping $L(1,2)$ to a constant section. Composing with a translation in the fiber we get a global symplectomorphism $\phi$ of $T^* T^2$ with $\phi( L(1,2)) = \mathbb{O}$, the zero section. The image $\phi( L(2,2))$ is now an exact Lagrangian torus. But by \cite{gr} Theorem $2.3.B_4''$ exact Lagrangians must intersect the zero section, and this gives our contradiction.


\end{document}